\DeclareRobustCommand{\rvdots}{
  \vbox{
    \baselineskip4\p@\lineskiplimit\z@
    \kern-\p@
    \hbox{.}\hbox{.}\hbox{.}
  }}
\newtheorem{theorem}{Theorem}[section]
\newtheorem{lemma}[theorem]{Lemma}
\theoremstyle{definition}
\newtheorem{definition}[theorem]{Definition}
\newtheorem{example}[theorem]{Example}
\theoremstyle{remark}
\newtheorem{remark}[theorem]{Remark}
\newtheorem{fact}[theorem]{Fact}
\numberwithin{equation}{section}
\DeclareMathOperator{\Aut}{Aut}
\DeclareMathOperator{\aut}{aut}
\DeclareMathOperator{\IMG}{IMG}
\DeclareMathOperator{\Stab}{Stab}
\newcommand{\Esc}{\mathbf {I}}
\newcommand{\Ipart}{\mathbb {U}} 
\newcommand\C{\mathbb {C}} 
\newcommand\Z{\mathbb {Z}} 
\newcommand\N{\mathbb {N}} 
\newcommand\A{\mathbf {A}} 
\newcommand\T{\mathcal {T}} 
\newcommand\Sing{\mathbf {S}} 
\newcommand\Spider{\mathbb {S}} 
\newcommand\PP{\mathbf {P}} 
\newcommand\Id{\mathbf {1}}
\newcommand\Kgroup{\mathcal{K}} 
\newcommand\Kaut{\mathbf{K}} 
\newcommand{\qedcited}{\hfill \ensuremath{\triangle}}
\newcommand{\autBfs}{\Aut^{f.s.}_{\mathcal{B}}}
\begin{document}

\title{Iterated Monodromy Groups of Exponential Maps}

\author{Bernhard Reinke}
\address{Institut de Mathématiques (UMR CNRS7373) \\
Campus de Luminy \\
163 avenue de Luminy --- Case 907 \\
13288 Marseille 9 \\
France}
\curraddr{}
\email{}
\thanks{}

\subjclass[2010]{37F10; 37B10; 20E08}

\keywords{iterated monodromy group; transcendental function; exponential
function; amenability; Schreier graphs}

\date{}

\begin{abstract}
	This paper introduces iterated monodromy groups for transcendental
	functions and discusses them in the simplest setting, for
	post-singularly finite exponential functions. These groups are
	self-similar groups in a natural way, based on an explicit
	construction in terms of kneading sequences. We investigate the group
	theoretic properties of these groups, and show in particular that they
	are amenable, but they are not elementary subexponentially amenable.
\end{abstract}

\maketitle

\section{Introduction}
In the iteration theory of rational maps, iterated monodromy groups are
self-similar groups associated to post-singularly finite dynamical systems.
These groups encode the Julia set of a rational function from the point of view
of symbolic dynamics \cite{nekrashevych2005self}. Conversely, many classical examples of
self-similar groups with exotic geometric properties, such as the
Fabrykowski-Gupta \cite{FabrykowskiGupta} and the Basillica group \cite{GZ2002}, arise in a natural way as iterated
monodromy groups of
rational maps. 

Much of the study of symbolic dynamics of quadratic polynomials has been done
in terms of dynamic rays, as well as in terms of kneading sequences
\cite{bruinsymbolic,milnor1988iterated,thurston2009}, before Iterated Monodromy Groups were introduced as a new
and powerful tool \cite{nekrashevych2005self, Bartholdi2006}. The relationships
between these groups and kneading sequences were developed in
\cite{Bartholdi2008kneading}. 

This paper is a first in a series of papers that study of iterated monodromy
groups of entire functions. Here we focus on a particularly fundamental class of
functions, the exponential family, motivated by the well known strong analogy
between the combinatorics of quadratic polynomials and exponential maps (see e.g.\ \cite{DGH}).
Like polynomials, exponential maps have so far only been studied in terms of rays and
kneading sequences (see e.g.\ \cite{Zimmer2003}) resulting in a complete
classification in \cite{laubner2008}, based on \cite{HSS}.

In this paper, we introduce iterated monodromy groups for exponential maps and
compare them to self-similar groups defined just in terms of formal kneading
sequences. For an exponential map $f$, we show that the
iterated monodromy action of $f$ is conjugate to the self-similar group
action defined by the kneading sequence of $f$. For all kneading
sequences, we show that the obtained group is a left-orderable amenable
group that is residually solvable, but not residually finite. 

We give a short background in holomorphic dynamics in section 2, with a special focus on the exponential family. 
Next in section 3 we provide the algebraic and graph theoretic background to define the iterated monodromy group of a post-singularly
finite entire function. We give an explicit description of the iterated
monodromy group in terms of kneading automata in section 4, see
Theorem~\ref{thm:img}.
The structure of the orbital Schreier graphs is investigated in section 5, where
we show in Theorem~\ref{thm:schreier} that every component of the (reduced) orbital
Schreier graph is a tree with countably many ends. This result together with the
work in \cite{Reinkegroup} is then used in section 6, where we collect
group theoretic properties of the iterated monodromy groups of exponential functions, in particular amenability (see Theorem~\ref{thm:amenable}).

\emph{Acknowledgements.} We gratefully acknowledge support by the Advanced Grant HOLOGRAM by the European Research Council. Part of this research was
done during visits at Texas A\&M University and at UCLA\@.
We would like to thank our hosts, Volodymyr Nekrashevych and Mario Bonk, as well as the HOLOGRAM team, in particular Kostiantyn Drach, Dzmitry Dudko, Mikhail Hlushchanka, Wolf Jung, David Pfrang and Dierk Schleicher, for helpful discussions and comments.

\section{Dynamics of Exponential Maps}

\subsection{General entire dynamics}
We give a very short introduction into transcendental dynamics relevant to our needs, see \cite{schleicher2010dynamics} for a survey.
We start with the definition of a post-singularly finite entire function.
\begin{definition}
    Let $f \colon \C \rightarrow \C$ be an entire function. A \emph{critical value} is the image of a critical point, i.e.\ $f(c)$ where $f'(c) = 0$. An \emph{asymptotic value} is a limit $\lim_{t \to \infty}f(\gamma(t))$ where $\gamma \colon [0,\infty) \rightarrow \C$ is a path with $\lim_{t \to  \infty}\gamma(t) = \infty$.
        The set of singular values is defined as
        \begin{eqnarray*}
            \Sing(f) = \overline{\left\{\text{critical and asymptotic values}\right\}}
        \end{eqnarray*}
        and the set of post-singular values is $\PP(f) = \overline{\bigcup_{n\geq 0}f^n(\Sing(f))}$.
    The map $f$ is called \emph{post-singularly finite} if $\PP(f)$ is finite.
\end{definition}

The following lemma is the basis of our consideration:
\begin{lemma}[{\cite[Theorem~1.13]{schleicher2010dynamics}}]
  Let $f$ be an entire function. Then $f$ restricts to an unbranched covering from $\C \setminus f^{-1}(\Sing(f))$ to $\C \setminus \Sing(f)$.
	\label{lem:covering}
  \qedcited
\end{lemma}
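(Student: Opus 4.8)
The plan is to verify the defining local-triviality property of a covering directly: for every $w_0 \in \C \setminus \Sing(f)$ I will produce an evenly covered neighborhood. Write $B = \C \setminus \Sing(f)$ and $E = \C \setminus f^{-1}(\Sing(f))$. Since $\Sing(f)$ is closed by definition, $B$ is open, so I may fix an open round disk $D$ with $w_0 \in D$ and $\overline{D} \subset B$. Two preliminary observations simplify matters: first, $f(E) \subseteq B$ and $f^{-1}(D) \subseteq E$ (because every preimage of a point of $\Sing(f)$ lies in $f^{-1}(\Sing(f))$), so it suffices to analyse $f \colon f^{-1}(D) \to D$; second, every critical point of $f$ maps to a critical value, hence lies in $f^{-1}(\Sing(f))$, so $f' \neq 0$ on $E$ and $f|_E$ is a local biholomorphism. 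Being a local homeomorphism into a Hausdorff space, $f|_E$ automatically has \emph{unique} path lifting, since the locus where two lifts of a path agree is both open and closed.

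The heart of the argument is an \emph{existence} statement for path lifts over $D$, and this is where the hypothesis on asymptotic values enters. I claim that for every path $\gamma \colon [0,1] \to D$ and every $z_0 \in f^{-1}(\gamma(0))$ there is a lift $\tilde\gamma \colon [0,1] \to E$ with $f \circ \tilde\gamma = \gamma$ and $\tilde\gamma(0) = z_0$. Using the local homeomorphism property one lifts on a maximal half-open interval $[0,s)$; suppose $s \le 1$ and the lift does not extend to $s$. Consider the accumulation set $A \subset \widehat{\C}$ of $\tilde\gamma(t)$ as $t \to s^-$. If $A$ contains a finite point $z^*$, then $f(z^*) = \gamma(s) \in B$, so $z^* \in E$; choosing the local inverse branch $g$ of $f$ at $z^*$ and comparing $g \circ \gamma$ with $\tilde\gamma$ via unique path lifting forces $\tilde\gamma(t) \to z^*$, so the lift extends continuously past $s$, contradicting maximality. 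Hence $A = \{\infty\}$, i.e.\ $\tilde\gamma(t) \to \infty$ while $f(\tilde\gamma(t)) = \gamma(t) \to \gamma(s)$; but then $\gamma(s)$ is an asymptotic value and lies in $\Sing(f)$, contradicting $\gamma(s) \in \overline{D} \subset B$. This establishes complete path lifting over $D$.

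With existence and uniqueness of path lifts in hand, the even covering follows by the standard trivialization over the simply connected disk $D$. For each $z \in f^{-1}(w_0)$ define $\phi_z \colon D \to E$ by lifting, from $z$, the straight segment from $w_0$ to a point $w$ and taking its endpoint; unique path lifting makes $\phi_z$ well defined and continuous, and since $f$ is a local homeomorphism $\phi_z$ is an open embedding onto a set $U_z$ with $f|_{U_z} \colon U_z \to D$ a homeomorphism. Unique path lifting of the reversed segments shows that the $U_z$ are pairwise disjoint and that $f^{-1}(D) = \bigsqcup_{z} U_z$, so $D$ is evenly covered and $f|_E$ is an unbranched covering onto $B$.

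I expect the main obstacle to be the completeness step, that is, ruling out that a lift escapes to infinity in finite parameter time: the purely topological covering machinery is routine, but the fact that $\overline{D}$ avoids the asymptotic values — and hence that no escaping lift can exist over $D$ — is precisely the analytic input that distinguishes the transcendental setting from the rational one, where properness would supply this for free. Some care is also needed in the accumulation-set dichotomy to ensure that a lift which does \emph{not} escape to infinity genuinely converges rather than merely accumulates, which is handled by the comparison with a local inverse branch indicated above.
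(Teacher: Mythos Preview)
Your argument is correct and is essentially the standard proof of this fact. Note, however, that the paper does not give its own proof of this lemma: it is stated as a citation of \cite[Theorem~1.13]{schleicher2010dynamics} and closed with the \verb|\qedcited| marker, so there is nothing to compare against. What you have written is a self-contained verification of the cited result: the local-biholomorphism observation removes critical points from $E$, the maximal-lift argument together with the dichotomy on the accumulation set in $\widehat{\C}$ shows that a failing lift would force $\gamma(s)$ to be an asymptotic value, and the final trivialization over the simply connected disk $D$ via lifted radial segments is the routine covering-space construction. One very minor cosmetic point: you invoke $\gamma(s) \in \overline{D} \subset B$, but in fact $\gamma$ takes values in $D$ itself, so $\gamma(s) \in D$ already suffices; the closure is not needed.
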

In fact, an alternative definition of $\Sing(f)$ is that $\Sing(f)$ is the smallest closed subset $S$ such that $f$ restricts to an unbranched covering over $\C \setminus S$. As $\PP(f)$ is a closed and contains $\Sing(f)$, we see that that $f$ also restricts to an unbranched covering from $\C \setminus f^{-1}(\PP(f))$ to $\C \setminus \PP(f)$. As $\PP(f)$ is forward invariant, we have that $\PP(f) \subset f^{-1}(\PP(f)) \subset f^{-2}(\PP(f)) \subset \dots$
is an increasing chain of closed subsets. From this we can show by induction that $f^n$ restricts to an unbranched covering from
$\C \setminus f^{-n}(\PP(f)) \rightarrow \C \setminus \PP(f)$, using the fact that compositions of coverings of manifolds are again coverings.

The \emph{escaping set} $\Esc(f)$ is the set of points which escape to
infinity under the iteration of $f$, i.e.\
\begin{eqnarray*}
  \Esc(f) = \{z \colon \lim_{n \rightarrow \infty} f^n(z) = \infty\}\text{.}
\end{eqnarray*}

\begin{definition}
  A \emph{dynamic ray} is a maximal injective curve $\gamma \colon (0,\infty)
  \rightarrow \Esc(f)$ with $\gamma(t) \rightarrow \infty$ as
  $t\rightarrow \infty$.

  We say that $\gamma$ \emph{lands}
  at $a$ if $\gamma(t) \rightarrow a$ for $t \rightarrow 0$.
\end{definition}
We should note that this definition is not the precise standard definition given in \cite{schleicher2010dynamics},
however, it is appropriate in the study of post-singularly finite exponential maps as done in \cite{laubner2008}.
We will only use dynamic rays for exponential maps, so this is not an issue for us.

\subsection{Combinatorics of exponential maps}
The \emph{exponential family} is the family of functions $E_\lambda(z) = \lambda
\exp(z)$ for $\lambda \in \C^* \coloneqq \C \setminus \left\{ 0
\right\}$. The only singular value of $\lambda \exp(z)$ is $0$. It is the limiting value along the negative
real axis. It is also an omitted value, so for the exponential family,
Lemma~\ref{lem:covering}
specialized to the well-known fact that every function in
the exponential family is a covering from $\C$ to $\C^*$.

This is in
fact a universal covering, and the group of deck transformations are
given by translations of $2 \pi i$. In the following, we will often
consider collections which form a free orbit under translations with
multiplies of $2 \pi i$. A prime example is the set of preimages
${E^{-1}_\lambda}(z)$ of any
point $z  \in \C^*$.  As $\Sing(E_\lambda(z)) = \left\{ 0 \right\}$, we have
$\PP(E_\lambda(z)) = \left\{ E^n_\lambda(0) \colon n \geq 0 \right\} =
\left\{ 0, \lambda, E_\lambda(\lambda), \dots \right\}$. 

In this subsection, $f$ will always denote a post-singularly finite function in the
exponential family. In this setting, $0$ is a strictly preperiodic point, as it
is an omitted value and has finite forward orbit. We denote the preperiod of $0$
as $k$ and the period of $0$ as $p$, so $\PP(f) = \{0, f(0), \dots f^{k+p-1}(0)\}$
with $f^{k+p}(0) = f^k(0)$.

The dynamics of post-singularly finite exponential maps can be studied via dynamical rays, as seen in the following theorem:
\begin{theorem}[\cite{Zimmer2003}]
  Let $f(z) = \lambda \exp(z)$ be a post-singularly finite function in
  the exponential family. Then there is a dynamic ray landing at 0 which is
  preperiodic.
  \label{thm:ex-dynamicalray}
  \qedcited
\end{theorem}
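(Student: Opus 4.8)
The plan is to reduce the statement to a landing theorem for repelling periodic points and then transport the conclusion to $0$ by pulling back along the singular orbit. Since $0$ is strictly preperiodic with preperiod $k$ and period $p$, the point $y \coloneqq f^k(0)$ is periodic, and the orbit of $0$ reaches the cycle $\{y, f(y), \dots, f^{p-1}(y)\}$ after exactly $k$ steps. If I can produce a preperiodic dynamic ray landing at $0$ by first finding a periodic ray landing at $y$ and then lifting it back $k$ times under $f$, I am done. I would first record that this cycle is repelling: the only singular value of $f$ is $0$, and because $0$ is strictly preperiodic with finite orbit, it is neither attracted to nor accumulates on the cycle, so the cycle can be neither attracting, parabolic, nor the boundary of a rotation domain; hence it is repelling.

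The main structural input I would set up is the classification of the escaping set $\Esc(f)$ into dynamic rays indexed by external addresses. Concretely, I would parametrize rays by (exponentially bounded) integer sequences $\underline{s} = s_1 s_2 s_3 \dots$ recording the sequence of horizontal strips visited by the forward orbit, so that $f$ carries the ray of address $\underline{s}$ to the ray of the shifted address $\sigma \underline{s}$. In this dictionary a ray is periodic exactly when its address is $\sigma$-periodic and preperiodic exactly when its address is $\sigma$-preperiodic, and the dynamics of rays is conjugate to the shift. This is the place where the analytic heavy lifting of \cite{Zimmer2003} enters.

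With this in hand, the key ingredient is the landing theorem: every repelling periodic point is the landing point of at least one periodic dynamic ray whose address is periodic under $\sigma$. Applying it to the repelling periodic point $y = f^k(0)$ yields a periodic ray $\gamma$ landing at $y$. I would then pull $\gamma$ back along the orbit $0 \to f(0) \to \dots \to f^k(0) = y$. At each step $f^j(0) = f(f^{j-1}(0))$, and since $\gamma$ lies in $\Esc(f) \subset \C \setminus \PP(f)$ while $f$ restricts to a covering of $\C \setminus \PP(f)$ by Lemma~\ref{lem:covering} and the subsequent discussion, I can lift the ray along the branch sending a neighborhood of $f^j(0)$ to a neighborhood of $f^{j-1}(0)$. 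One checks that the lift again lies in the escaping set (a point maps into $\Esc(f)$ precisely when it is itself escaping) and still tends to infinity, so it is a dynamic ray landing at $f^{j-1}(0)$. After $k$ lifts I obtain a dynamic ray $\delta$ landing at $0$ with $f^k(\delta) = \gamma$; since $\gamma$ is periodic, $\delta$ is preperiodic, which is the claim.

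The hard part is the landing theorem for periodic points and the underlying classification of $\Esc(f)$ into dynamic rays of prescribed external address. This is where genuine transcendental-dynamical work is needed: constructing the rays, controlling their behavior near infinity, and proving that a periodic ray accumulates at a single repelling periodic point, with surjectivity of the periodic-point-to-ray correspondence obtained from an expansion/pullback argument analogous to the Douady--Hubbard theorem for polynomials. By contrast, once that theorem is available, the reduction to $0$ via lifting along the singular orbit is routine, using only that $f$ is a covering away from $\PP(f)$ and that escaping is both preserved and detected under $f$.
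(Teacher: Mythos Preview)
The paper does not prove this theorem at all: the statement carries the \verb|\qedcited| marker (the triangle) and is attributed directly to \cite{Zimmer2003}, so there is no argument in the present paper to compare your proposal against. Your sketch is a reasonable outline of how the cited reference proceeds---classifying $\Esc(f)$ into rays indexed by external addresses, invoking the landing theorem for repelling periodic points, and then pulling back along the singular orbit---but as far as this paper is concerned the result is taken as a black box, and any proof you supply goes beyond what the paper itself contains.
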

We collect some facts about dynamic rays of exponential maps that are all discussed in \cite{Zimmer2003,laubner2008}.
\begin{fact}
  \begin{enumerate}
  \item Two different dynamic rays do not intersect, but they might land at
    the same point.
  \item The preimage of a dynamic ray is a family of dynamic rays forming a
		free orbit under translations with multiplies of $2 \pi i$.
  \item If $\gamma$ lands at
    $a$, then for every $b \in f^{-1}(a)$  there is a unique preimage component of
    $\gamma$ landing at $b$.
  \item If $\gamma$ lands at $0$, then all preimage components separate the
    plane, the connected components of $\C \setminus f^{-1}(a)$ also
		form a free orbit under  
    translations with multiples of $2 \pi i$.
  \end{enumerate}
  \qedcited
\end{fact}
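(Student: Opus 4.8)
The plan is to derive parts (2)--(4) as essentially formal consequences of the covering structure of $f$ together with path-lifting, and to isolate the non-crossing claim in part (1) as the one genuinely analytic ingredient. First I would fix the setup. Since $0$ is strictly preperiodic and $\PP(f)$ is finite, no post-singular point escapes, so $\Esc(f) \cap \PP(f) = \emptyset$ and in particular every dynamic ray is a curve in $\C^* = \C \setminus \{0\}$. I would also record two elementary facts about $f(z) = \lambda \exp(z)$: it is a universal covering $\C \to \C^*$ with deck group the translations $z \mapsto z + 2\pi i n$, $n \in \Z$; and $f(z) \to \infty$ is equivalent to $\operatorname{Re} z \to +\infty$, while $f(z) \to 0$ is equivalent to $\operatorname{Re} z \to -\infty$.

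For part (1), that two distinct rays can share a landing point is immediate, but the non-crossing assertion is the substantive one, and this is exactly where I expect the main obstacle. It is not formal: it needs the description of $\Esc(f)$ in which every escaping point lies on a unique ray labelled by an external address, with rays of distinct addresses disjoint. I would import this from the structure theory of escaping dynamics of exponential maps in \cite{Zimmer2003, laubner2008} rather than reprove it.

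Parts (2) and (3) I would handle by lifting. Writing $\Gamma = \gamma((0,\infty)) \subset \C^*$, the covering property makes $f^{-1}(\Gamma)$ a disjoint union of arcs, each mapped homeomorphically onto $\Gamma$ and permuted simply transitively by the deck group, which gives the free $2\pi i \Z$-orbit of part (2). Each lift $\tilde\gamma$ is injective, escaping (as $f(\tilde\gamma(t)) = \gamma(t) \in \Esc(f)$ and $\Esc(f)$ is backward invariant), tends to $\infty$ because $\gamma(t) \to \infty$ forces $\operatorname{Re}\tilde\gamma(t) \to +\infty$, and is maximal since $f$ is a local homeomorphism; hence it is again a dynamic ray. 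For part (3) with landing point $a \neq 0$ (the case $a=0$ being vacuous since $0$ is omitted), I would extend $\gamma$ by $\gamma(0)=a$ to a path in $\C^*$ and invoke unique path-lifting: for each $b \in f^{-1}(a)$ the unique lift starting at $b$ restricts to a preimage ray landing at $b$, and uniqueness of the component is uniqueness of the lift.

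Finally, part (4) rests on the omitted-value observation. When $\gamma$ lands at $0$ there are no preimages of $0$, so no lift can land; instead $\gamma(t) \to 0$ forces $\operatorname{Re}\tilde\gamma(t) \to -\infty$, so every preimage component tends to $\infty$ at both ends and therefore, by the Jordan curve theorem on the sphere, separates $\C$. Reading the last clause as a statement about the complement of the union $f^{-1}(\Gamma)$ of preimage rays, its components are permuted simply transitively by the deck group, which yields the asserted free $2\pi i \Z$-orbit.
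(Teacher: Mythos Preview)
The paper does not give its own proof of this statement. It is stated as a \emph{Fact} with the marker \verb|\qedcited| (the triangle), introduced by the sentence ``We collect some facts about dynamic rays of exponential maps that are all discussed in \cite{Zimmer2003,laubner2008}.'' In other words, the paper simply imports all four items from the literature.

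Your proposal therefore goes beyond what the paper does: you actually sketch proofs of parts (2)--(4) from the covering structure and correctly isolate part (1) as the one genuinely analytic input that must be taken from \cite{Zimmer2003,laubner2008}. Your arguments are sound. Two small remarks: for maximality in part (2) you might phrase it as ``any injective extension of $\tilde\gamma$ in $\Esc(f)$ would push forward under the local homeomorphism $f$ to an injective extension of $\gamma$ in $\Esc(f)$, contradicting maximality of $\gamma$''; and for the free-orbit claim on complementary components in part (4), the cleanest justification is that $\C^* \setminus \Gamma$ is simply connected (a slit plane), so the restricted covering $f \colon \C \setminus f^{-1}(\Gamma) \to \C^* \setminus \Gamma$ is trivial and the deck group $2\pi i\Z$ acts simply transitively on its sheets. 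With these tweaks your write-up is a self-contained argument where the paper offers only a citation.
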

\begin{definition}
  A \emph{ray spider} is a family $\Spider = (\gamma_a)_{a \in \PP(f)}$ such
  that $\gamma_a$ is a dynamic ray landing at $a$ for each $a \in \PP(f)$.
\end{definition}
\begin{remark}
In this definition, we do not require any invariance properties. 

Our notion of a ray spider is a special case of the general notion of
spiders given in \cite{Zimmer2003}.
By Theorem~\ref{thm:ex-dynamicalray}, there exists a ray spider: if
$\gamma$ is a dynamic ray landing at $0$,
then $\gamma_{f^i(0)} = f^i (\gamma), 0 \leq i < k + p$ is a ray spider.
This spider is not necessarily forward invariant, as it might happen that
$f^k(\gamma) \not=f^{k+p}(\gamma)$ (the period of the rays may be a multiple of the period of the landing point). This is not an issue in our construction as
we will consider the family of pullbacks of a given spider.
\end{remark}

\begin{definition}
  Let $\Spider = (\gamma_a)_{a \in \PP(f)}$ be a ray spider. The
  \emph{pullback} of $\Spider$ is the ray spider $(\tilde \gamma_a)$ where
  $\tilde \gamma_a$ is the unique preimage of $\gamma_{f(a)}$ landing at $a$.

  The \emph{dynamical partition} associated to $\Spider$ is the partition of $\C
  \setminus f^{-1}(\gamma_0)$ into its connected components. We denote the
  connected component of $0$ by $\Ipart_0$ and define $\Ipart_n =
	\Ipart_0 +2 \pi i n$. Note that the dynamical partition only
  depends on the ray landing at 0.

  The kneading sequence of $f$ is the sequence $(k_n)_{n \in \N}$ so
  that $f^n(0) \in \Ipart_{k_n}$. The kneading sequence is in fact
	independent of $\Spider$, see \cite{laubner2008} for a more detailed
	discussion. 
\end{definition}
\begin{example}
  Let $k \in \Z \setminus {0}$, and consider $f(z) = 2 k \pi i \exp(z)$. For
  this map, $0$
  is mapped to $2k \pi i$, which is a fixed point of $f$. Hence $f$ is
  post-singularly finite with $\PP(f) = \{0, 2k \pi i\}$. Let $\gamma_0$ be a
  dynamic ray landing at 0, and let $\Ipart$ be the associated dynamical
  partition. Then $0 \in \Ipart_0$ by definition of $\Ipart_0$ and $2 k \pi i
  \in \Ipart_k = \Ipart_0 + 2 k \pi i$, so the kneading sequence of $f$ is
  $0\overline k$.
\end{example}

\section{Iterated Monodromy Groups}

\subsection{The dynamical preimage tree $\T$}

Let $f$ be a post-singularly finite entire function and $t \in \C \setminus \PP(f)$.
    \begin{definition}
      Choose a base point $t \in \C \setminus \PP(f)$. Let $L_n \coloneqq f^{-n}(t)$ be the
      preimage of $t$ under the $n$-th iterate of $f$.

      The \emph{dynamical preimage tree}
      $\T$ is a rooted tree with vertex set $\bigsqcup_{n \geq 0}L_n$  (where $\bigsqcup$
      denotes disjoint union) and edges $w \rightarrow f(w)$ for $w \in L_{n+1}, f(w) \in L_{n}$. Its root is $t$.
    \end{definition}
    The dynamical preimage tree is always a regular rooted tree, i.e.\ all vertices have the same number of children. For polynomials, this number is
    the degree of the polynomial. For transcendental entire functions, every vertex has countably infinite many children.
    We will show in subsection~\ref{sss:trees} that for postsingularly finite exponential maps, the dynamical preimage tree has an extra regularity
    based on the periodicity of the exponential map.
    \begin{figure}
      \centering
      \begin{tikzpicture}[node distance=1.7cm]
        \tikzstyle{level 1}=[sibling distance=50pt] \tikzstyle{level 2}=[sibling
        distance=10pt] \node (root) {$t=L_0$} [grow'=up] child[dotted] {node
          (l1) {$f^{-1}(t)=L_1$}} child { child[dotted] {node{}} child {} child
          {} child {} child[dotted] {}} child { child[dotted] {} child {} child
          {} child {} child[dotted] {}} child { child[dotted] {} child {} child
          {} child {} child[dotted] {}} child[dotted] {}; \node (l2) [above
        of=l1] {$f^{-2}(t)=L_2$};
      \end{tikzpicture}
      \caption{Dynamical preimage tree}
      \label{fig:tree}
    \end{figure}
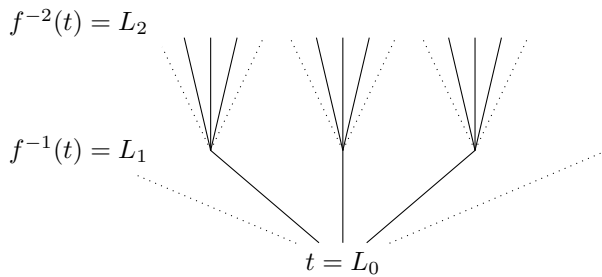
\subsection{Iterated Monodromy Action}
Each level of $\T$ is the preimage of $t$ under a covering map, namely $f^n
\colon \C \setminus f^{-n}(\PP(f)) \rightarrow \C \setminus \PP(f)$. Hence
$\pi_{1}(\C \setminus \PP(f), t)$ acts on $L_n$ via path lifting:
if $\gamma \colon [0,1] \rightarrow \C \setminus \PP(f)$ is a loop based on $t$
and $v \in L_n$ is a $n$-th preimage of $t$, then there is a unique lift
$\gamma^v$ making the following diagram commute: 
\begin{equation*}
  \begin{tikzcd}[ampersand replacement=\&]
    \& \left(\C \setminus f^{-n}(\PP(f)), v \right) \ar[d,"f^n"] \\
    \left( \left[ 0,1 \right], 0 \right)\ar[r,"\gamma"] \ar[ru,"\gamma^v", dotted] \& \left(  \C \setminus \PP(f), t \right) \\
  \end{tikzcd}
\end{equation*}
So $\gamma^v(0) = v$, and $\gamma^v(1) \in L_n$ might be another $n$-th preimage.
We define $[\gamma](v) \coloneqq \gamma^v(1)$. Using the homotopy
lifting properties of coverings, we can see that this defines an action of
$\pi_{1}(\C \setminus \PP(f), t)$ on $L_n$. If $w \in L_{n+1}$ is a child of
$v$, then the following diagram commutes (by uniqueness of lifts):

\begin{equation*}
  \begin{tikzcd}[ampersand replacement=\&]
    \& \left(\C \setminus f^{-n-1}(\PP(f)), w \right) \ar[d,"f^n"] \\
    \& \left(\C \setminus f^{-n}(\PP(f)), v \right) \ar[d,"f^n"] \\
    \left( \left[ 0,1 \right], 0 \right)\ar[r,"\gamma"] \ar[ruu,"\gamma^w", dotted] \ar[ru,"\gamma^v", dotted] \& \left(  \C \setminus \PP(f), t \right) \\
  \end{tikzcd}
\end{equation*}
By commutativity of the diagram $f(\gamma^w(1)) = \gamma^v(1)$
so $[\gamma](w)$ is also a child of $[\gamma](v)$.
This means that actions on the levels are compatible and give rise to an action of
$\pi_{1}(\C \setminus \PP(f))$ on $\T$. This is the $\emph{iterated monodromy action}$.
\begin{definition}
 Let $f$ be a post-singularly finite  entire function, $t \in \C \setminus \PP(f)$.
 Let $\phi \colon \pi_{1}(\C \setminus \PP(f), t) \rightarrow \Aut(\T)$ be
 the group homomorphism induced by the iterated monodromy action. The iterated monodromy group of $f$ with base point $t$ is
 the image of $\phi$. By the first factor theorem we have
            \begin{eqnarray*}
                \IMG(f) \cong \faktor{\pi_{1}(\C \setminus \PP(f),t)}{\ker \phi}
            \end{eqnarray*}
\end{definition}
This definition depends a priori on the base point $t \in \C \setminus \PP(f)$.
For a different base point $t'$, every path from $t$ to $t'$ gives rise to an
isomorphism of preimage trees over $t$ and over $t'$, so we can identify the
groups up to inner automorphisms. 
See \cite[Proposition~5.1.2]{nekrashevych2005self} for a detailed discussion
in the rational case.
\subsection{$\Z$-regular rooted trees}
\label{sss:trees}
We use the following definition of rooted trees:
\begin{definition}
  A \emph{rooted tree} is a tuple $T=(V, E, r)$ such that $(V, E)$ forms a tree (with
  vertex set $V$ and edge set $E$) and $r \in V$, which we call the \emph{root} of $T$. 
  We endow $T$ with the
  unique orientation so that all vertices are reachable from the root, i.e.\
  for every vertex $v$, there is directed path from the root to $v$. 

  If
  $(v,w)$ is a directed edge for this orientation, we say that $w$ is a \emph{child}
  of $v$ and $v$ is the \emph{parent} of $w$. If $v$ has no children, we call it
  a \emph{leaf}.

  If $w$ is reachable form $v$, we say that $w$ is a descendant of $v$ and $v$
  is an ancestor of $w$. We denote by $T_v$ the rooted tree which is the induced
  subgraph on the set of descendants of $v$ together with $v$ as the new root.
  An end of a rooted tree $T$ is a sequence $v_n$ so that $v_0$ is the root of
  $T$ and $v_{n+1}$ is a child of $v_n$. We denote by $\partial T$ the set of
  ends of $T$.
\end{definition}
We will mainly consider countable infinite trees without leaves.  

In fact, $\partial T$ can be defined without fixing a root of $T$, one way is by
considering equivalence classes of geodesic rays, where two geodesic rays are
equivalent if they have a common tail. Given a root $r$ and a geodesic ray $\gamma$, there is
always a unique geodesic ray starting at $r$ equivalent to $\gamma$.
Also, $\partial T$ is a totally disconnected Hausdorff space with clopen
subset $\partial T_v \subset \partial T$. The topology is also
independent of the root. If $T$ is a locally finite tree without leaves,
then $\partial T$ is compact.
\begin{definition}
  A $\Z$\emph{-regular rooted tree} $T$ is a tuple $(V, E, r, \eta)$, where $(V, E, r)$ is
  a rooted tree and $\eta$ is a right $\Z$-action $\eta \colon V \setminus\{r\}
  \times \Z \rightarrow V \setminus\{r\}$
  such that for all vertices $v \in V$, the set of its children forms a free orbit under
  the action.

  Note that this implies that the root is fixed by the action, as it is the only
  vertex without a parent. Also the tree has no leaves, as the empty set is not
  a free orbit under a $\Z$-action.

  An isomorphism between $\Z$-regular rooted trees is a tree isomorphism which
  preserves the root and commutes with the additional right $\Z$-actions.
  We denote by $\Aut_\Z(T)$ the group of automorphisms of $T$ as a $\Z$-regular
  rooted tree. Every element of $\Aut_\Z(T)$ preserves the root of $T$ and acts by a translation on the first level.
  We denote by $\rho \colon \Aut_\Z(T) \rightarrow \Z$ the group homomorphism given by the first level action.
  The kernel of $\rho$ is the stabilizer of the first level, as every element of $\Aut_\Z(T)$ acts by translation,
  this is also the stabilizer of any vertex on the first level. For a vertex $v \in V$ and a subgroup $G \subset \Aut_\Z(T)$ we denote the stabilizer
  of v in $G$ by $\Stab_G(v)$. We denote the stabilizer of the first level as $\Stab_G$. 
\end{definition}
Note that $\Aut_\Z(T)$ also acts on $\partial T$. This action is an fact
faithful,
as every vertex is part of a sequence defining an end.
\begin{example}
  The standard $\Z$-regular tree has as vertex set $\Z^*$, the set of finite
  words in $\Z$. Its root is the empty word $\emptyset$. Its edges are all pairs
  of the form $(v, vn)$ for $v \in \Z^*, n \in \Z$ (here $vn$ denotes the word
  $v$ concatenated with the letter $n$). So for each vertex $v$, the set of it
  children are all words obtained by concatenating one letter to it.
  Also, the set of ends can be identified with the set of right-infinite
  words, which we denote by $\Z^\omega$.
  
  The right action is given by
  \begin{eqnarray*}
    \eta(vn,m) = v(n+m)\text{.}
  \end{eqnarray*}
  So the action is by translation on the last letter.
  By abuse of notation, we will denote the standard $\Z$-regular tree also by $\Z^*$.
\end{example}
The subgroups of $\Aut_\Z(\Z^*)$ were studied in
\cite{oliynyk2010groups} under the name of \emph{ZC-groups}.
Note that if $T$ is a $\Z$-regular rooted tree and $v$ is a vertex of $T$, then
$T_v$ is also a $\Z$-regular rooted tree. However, in general we have no
canonical choice of an isomorphism between $T$ and $T_v$. This is different for
the standard $\Z$-regular tree:
\begin{definition}
  For $g \in \Aut_\Z(\Z^*), v \in \Z^*$ let $g_{|v}$ denote the unique element in
  $\Aut_\Z(\Z^*)$ such that $g(vw) = g(v)g_{|v}(w)$. We say that $g_{|v}$ is the \emph{section}
  of $g$ at $v$.
\end{definition}
We will use the following set of easily verifiable cocycle equations:
\begin{eqnarray}
  (g_{|v})_{|w} = g_{|vw} \\
  (gh)_{|v} = g_{|h(v)} h_{|v}
  \label{eqn:cocycle}
\end{eqnarray}

We say that $g \in \Aut_\Z(\Z^*)$ is of finite activity on level $n$ if the set $\{v \in \Z^n \colon
g_{|v} \not= \Id\}$ is finite. We define $\aut_\Z(\Z^*)$ as the group of
automorphisms which have finite activity on every level. We will many work with subgroups of $\aut_\Z(\Z^*)$.
As we work with an infinite alphabet, we have to take care for the wreath recursion. The wreath recursion for $\aut_\Z(\Z^*)$ is
\begin{eqnarray*}
  \aut_\Z(\Z) &\cong& \left(\bigoplus_{x \in \Z} \aut_\Z(\Z)\right) \rtimes \Z \\
  g &\mapsto& (x \mapsto g_{|x}, \rho(g))
\end{eqnarray*}
We say a subgroup $G \subset \Aut_\Z(\Z^*)$ is self-similar if $g_{|v} \in G$ for all $g \in G$ and $v \in \Z^*$.
A subgroup $G \subset \Aut_\Z(\Z^*)$ is self-replicating if for all $v \in \Z^*$ and $g \in G$ there exists an $h \in G$ with $h_{|v} = g$. It is easy to see that is is enough to check this on the first level.

  \begin{lemma}
    Let $f$ be a post-singularly finite exponential function, $t \in \C \setminus \PP(f)$. Then the dynamical preimage tree
    of $f$ with base point $t$ is a $\Z$-regular tree and $\IMG(f)$ is a subgroup of $\Aut_\Z(\T)$ 
    \label{lem:expzregular}
  \end{lemma}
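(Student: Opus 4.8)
The plan is to write down the $\Z$-action on $\T$ explicitly and then verify that every monodromy transformation commutes with it. The action I would use is translation by multiples of $2\pi i$: for a non-root vertex $v$ and $m \in \Z$, set $\eta(v,m) = v + 2\pi i m$. First I would check this is well defined, i.e.\ that it preserves each level $L_n$. Since $f(z + 2\pi i m) = \lambda\exp(z + 2\pi i m) = \lambda\exp(z) = f(z)$, translation by $2\pi i m$ commutes with $f$, so $f^n(v + 2\pi i m) = f^n(v)$; in particular $v + 2\pi i m \in L_n$ whenever $v \in L_n$, and $\eta$ is visibly a right $\Z$-action. Next I would verify the defining property of a $\Z$-regular tree, that the children of each vertex form a single free orbit. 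The children of $v \in L_n$ are exactly $f^{-1}(v) \cap L_{n+1} = f^{-1}(v)$. Because $0$ is an omitted value, every vertex is nonzero (indeed $0 \in \PP(f) \subset f^{-n}(\PP(f))$, so $0 \notin L_n$), hence $f^{-1}(v)$ is a nonempty fiber of the universal covering $f\colon \C \to \C^*$. As the deck group of this covering is precisely the translations by $2\pi i \Z$, we get $f^{-1}(v) = \{w_0 + 2\pi i m : m \in \Z\}$, a single free $\Z$-orbit under $\eta$. This establishes that $\T$ is a $\Z$-regular tree.

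For the second claim I would show that each element $g = \phi([\gamma])$ of $\IMG(f)$ lies in $\Aut_\Z(\T)$, i.e.\ commutes with $\eta$. The key point is that the translation $T_m \colon z \mapsto z + 2\pi i m$ is a deck transformation of every covering $f^n \colon \C \setminus f^{-n}(\PP(f)) \to \C \setminus \PP(f)$: it satisfies $f^n \circ T_m = f^n$ by the computation above, and it preserves the domain since $f^n(z) \in \PP(f)$ if and only if $f^n(z + 2\pi i m) = f^n(z) \in \PP(f)$. Then for a loop $\gamma$ based at $t$ and a vertex $v \in L_n$, the path $T_m \circ \gamma^v$ projects to $\gamma$ under $f^n$ and starts at $T_m(v)$; by uniqueness of lifts $\gamma^{T_m(v)} = T_m \circ \gamma^v$, whence $[\gamma](v + 2\pi i m) = [\gamma](v) + 2\pi i m$. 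This is exactly the statement that $g$ commutes with $\eta$, so $\IMG(f) = \operatorname{im}\phi \subset \Aut_\Z(\T)$.

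The routine part is the verification that children form free orbits, which is immediate from the covering property and the omitted-value fact. The conceptual heart of the argument is the interplay between deck transformations and path lifting in the second paragraph, and the main thing to get right is that a single translation $T_m$ serves simultaneously as a deck transformation on every level $f^n$, so that commutation with $\eta$ holds uniformly across the whole tree rather than being checked level by level.
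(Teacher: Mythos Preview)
Your proof is correct and follows exactly the same approach as the paper: define the $\Z$-action by $2\pi i$-translation, use the $2\pi i$-periodicity of $f$ (hence of $f^n$) to see that children form free orbits, and observe that the translate of a lift is again a lift to conclude that the monodromy action commutes with this $\Z$-action. Your write-up is simply more explicit about why vertices are nonzero and why $T_m$ preserves the domain of each covering $f^n$, but there is no substantive difference in strategy.
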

  \begin{proof}
    The $\Z$-regular structure is given by translation by multiples of $2\pi i$. As two complex numbers have the same value
    under the exponential map if and only if they differ by a multiple of $2 \pi i$, it is clear that this really defines a
    $\Z$-regular structure. Also, if $w$ is an $n$-th preimage of $t$, and $\gamma$ is a loop on $t$, for the lift
    $\gamma^w$, the $2\pi i$ translate of $\gamma^w$ is also a lift of $\gamma$ by the $2 \pi i$ periodicity of $f^n$. This shows
    that the iterated monodromy action commutes which the $\Z$ action given by the $\Z$-regular structure, so $\IMG(f) \subset \Aut_\Z(\T)$.
  \end{proof}

\section{Combinatorial description}
\subsection{Automata}
\begin{definition}
 An automaton $\A$ is a map $\tau \colon Q \times X \rightarrow X \times Q$.
 We call Q the \emph{state set} and X the \emph{alphabet}. We will write the
 components of $\tau(a,x)$ often as $(a(x), a{|x})$. Here $a(x) \in X$ is called
 the image of $x$ under $a$, and $a{|x}$ is the restriction of $a$ at $x$.

 A $\emph{group automaton}$ is an automaton such that for all $a \in Q$, the map
 $x \mapsto a(x)$ is a bijection on $Q$. If the alphabet is $\Z$, that automaton
 is a $\Z$-automaton if for all $a \in Q$, the map $x \mapsto a(x)$ is a
 translation on $\Z$, i.e.\ equal to the map $x \mapsto x + n$ for some $n\in \Z$.
\end{definition}
We will only consider automata which have a distinguished identity state $\Id$, i.e.\ a state
such that $\tau(\Id,x) = (x, \Id)$ for all $x \in X$.
 We can draw automata using Moore diagram. As vertices we take the state set
 $Q$, and if $\tau(a, x) = (y, b)$, we draw an edge from $a$ to $b$ labeled $x|y$.
 Here is an example of a Moore diagram, of the so-called binary adding machine.
 
                \begin{equation*}
                    \begin{tikzpicture}[->,>=stealth,shorten >=1pt,auto,node distance=2cm,semithick]
                        \node[state] (A) {$a$};
                        \node[state] (I) [right of=A] {$\Id$};
                        \tikzstyle{every node}=[font=\footnotesize]
                        \path (A) edge node {$0|1$} (I)
                        (A) edge [loop left] node {$1|0$} (A)
                        (I) edge [loop right] node {$0,1|0,1$} (I);
                    \end{tikzpicture}
                \end{equation*}
\begin{definition}
  Let $\A$ be an automaton given by $\tau \colon Q \times X
  \rightarrow X \times Q$. We extend $\tau$ to a map $Q \times X^* \rightarrow
  X^* \times Q$ recursively via
  \begin{equation*}
   \tau(a, xv) = (a(x)a{|x}(v), a{|x}{|v}) 
  \end{equation*}
\end{definition}
If $\A$ is a group automaton, then for each $a \in A$, the extended map $X^*
\rightarrow X^*$ induces a tree automorphism of the regular $X$-tree. If $\A$ is
a $\Z$-automaton, it is a automorphism preserving the regular $\Z$-tree
structure.
\subsection{Kneading automata}
\begin{definition}
  Given two words $x_1\dots x_k, y_1 \dots y_p \in \Z^*$ with $x_k \not= y_p$ the automaton $\Kaut(x_1\dots x_k, y_1 \dots y_p)$
  has alphabet $\Z$ and states $a_1 \dots a_k, b_1 \dots b_p$ (and the identity
  state $\Id$) and the following transition function:
  \begin{eqnarray*}
    \tau(a_1, z) &=& (z+1, \Id) \\
    \tau(a_{i+1},x_i) &=& (x_i, a_i) \\
    \tau(b_1,x_k) &=& (x_k, a_k) \\
    \tau(b_1,y_p) &=& (y_p, b_p) \\
    \tau(b_{i+1},y_i) &=& (y_i, b_i) \\
    \tau(q,z) &=& (z, \Id) \text{ for all other cases.}
  \end{eqnarray*}
  \label{def:kneading}
\end{definition}
We note that $\Kaut(x_1\dots x_k, y_1 \dots y_p)$ is a $\Z$-automaton, indeed
$a_1$ acts on $\Z$ by the translation by one, and all other states act on $\Z$
as the identity. Figure~\ref{fig:moore} shows a reduced Moore diagram of $\Kaut(x_1\dots
x_k, y_1 \dots y_p)$, where labels with only one letter $z$ are abbreviations
for the label $z|z$ and all trivial arrows ending in the identity state
have been omitted.
\begin{example}
  The automaton $\Kaut(0,k)$ with $k \in \Z \setminus \left\{ 0 \right\}$ has the following (non-reduced) Moore diagram: 
                \begin{equation*}
                    \begin{tikzpicture}[->,>=stealth,shorten >=1pt,auto,node
                      distance=2cm,semithick]
                      \node[state] (id) {$\Id$};
                      \node[state] (a) [right of=id] {$a$};
                      \node[state] (b) [below of=a] {$b$};
                      \path (id) edge [loop left] node {$n|n$} (id);
                      \path (a) edge node [swap] {$n|n+1$} (id)
                            (b) edge node [swap] {$0|0$} (a);
                      \path (b) edge [loop right] node {$k|k$} (b);
                      \path (b) edge node {$z|z$} (id);
                    \end{tikzpicture}
                \end{equation*}
                Here $n$ stands for any element of $\Z$, and $z$ for any element of $\Z \setminus \left\{ 0, k \right\}$.
  \label{exa:Moore0k}
\end{example}
\begin{remark}
  We see that every non-trivial state has exactly one edge ending in it, so
  for every non-trivial state there is a unique left-infinite path ending in it.
  This implies that $\Kaut(x_1\dots x_k, y_1 \dots y_p)$ is a bounded activity
  automaton in the sense of \cite{sidki2004finite}: For any length $m$, there
  are $n+k$ paths of length $m$ ending in a non-trivial state in the Moore diagram, so for any $q$,
  the set $\left\{ v \in \Z^m \colon q|v \not= \Id \right\}$ has cardinality bounded
  by $n+k$. 
  \label{rem:boundedactivity}
\end{remark}
We denote by $\Kgroup(x_1\dots x_k, y_1 \dots y_p)$ the group of
          automorphisms of $\Z^*$ generated by
          $\Kaut(x_1 \dots x_k, y_1 \dots y_p)$.
          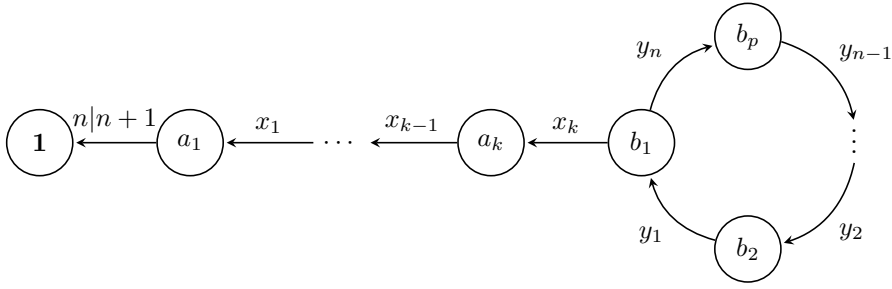
\begin{figure}
            \centering
            \begin{tikzpicture}[->,>=stealth,shorten >=1pt,auto,node
              distance=2cm,semithick]
              \node[state] (id) {$\Id$};
              \node[state] (a-1) [right of=id] {$a_1$};
              \node (a-2) [right of=a-1] {$\cdots$};
              \node[state] (a-k) [right of=a-2] {$a_k$};
              \node[state] (b-1) [right of=a-k] {$b_1$};
              \node[state] (b-2) [below right of=b-1] {$b_2$};
              \node (b-d) [above right of=b-2] {$\rvdots$};
              \node[state] (b-n) [above left of=b-d] {$b_p$};
              \path (a-1) edge node [swap] {$n|n+1$} (id)
                    (a-2) edge node [swap] {$x_1$} (a-1)
                    (a-k) edge node [swap] {$x_{k-1}$} (a-2)
                    (b-1) edge node [swap] {$x_{k}$} (a-k);
              \path[ bend left] (b-2) edge node {$y_1$} (b-1)
                                (b-d.south) edge node {$y_2$} (b-2)
                                (b-n) edge node {$y_{n-1}$} (b-d.north)
                                (b-1) edge node {$y_n$} (b-n);
            \end{tikzpicture}
            \caption{Moore diagram of kneading automata}
            \label{fig:moore}
          \end{figure}
      \begin{theorem}
        Let $f$ be a post-singularly finite exponential function with kneading
        sequence $x_1\dots x_k\overline{y_1\dots y_p} \in \Z^\omega$. Then the
        iterated monodromy action of $f$ is conjugate to the action of $\Kgroup(x_1
        \dots x_k, y_1 \dots y_p)$ on $\Z^*$.
        \label{thm:img}
      \end{theorem}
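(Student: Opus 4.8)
The plan is to construct an explicit conjugacy between the iterated monodromy action on the dynamical preimage tree $\T$ and the standard action of $\Kgroup(x_1\dots x_k, y_1\dots y_p)$ on $\Z^*$. By Lemma~\ref{lem:expzregular} we already know $\T$ is a $\Z$-regular tree and $\IMG(f)\subset\Aut_\Z(\T)$, so the first task is to fix a $\Z$-regular tree isomorphism $\Psi\colon\T\to\Z^*$. This isomorphism is governed by a choice of spider: I would fix a ray spider $\Spider=(\gamma_a)_{a\in\PP(f)}$ (which exists by Theorem~\ref{thm:ex-dynamicalray}) together with the dynamical partition it induces, so that the components $\Ipart_n=\Ipart_0+2\pi i n$ give a canonical $\Z$-indexing of the children of any vertex. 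Concretely, each preimage $w\in L_{n+1}$ of $v\in L_n$ lies in a translated fundamental domain, and labelling it by the integer $m$ with $w\in\Ipart_0+2\pi i m$ (relative to the appropriate pullback of the partition at $v$) produces the address in $\Z^*$. The free $\Z$-action by $2\pi i$ translation corresponds exactly to translation on the last letter, so $\Psi$ is a $\Z$-regular isomorphism.

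Next I would compute the iterated monodromy action of a convenient generating set of $\pi_1(\C\setminus\PP(f),t)$ through this identification. The natural generators are small loops around the post-singular points $0,f(0),\dots,f^{k+p-1}(0)$, suitably connected to the base point $t$ by arcs chosen compatibly with the spider legs $\gamma_a$. The key computation is the wreath recursion: lifting the loop around a point $a\in\PP(f)$ under $f$, one determines how the generator permutes the first-level children (a translation on $\Z$ recording the monodromy of the covering $f$) and what its sections are. Here the combinatorics of the dynamical partition enters decisively, since the kneading sequence records which component $\Ipart_{k_n}$ contains $f^n(0)$, and these indices are precisely the letters $x_i,y_j$ appearing in the transition function of Definition~\ref{def:kneading}. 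I expect the generator around $0$ to realise the state $a_1$ (acting by the unit translation $z\mapsto z+1$ on the first level, with trivial sections), while the generators around the forward images of $0$ realise the chain of states $a_2,\dots,a_k,b_1,\dots,b_p$, with sections dictated by how the spider legs pull back — this is exactly the content of the arrows $\tau(a_{i+1},x_i)=(x_i,a_i)$ and the periodic cycle $\tau(b_{i+1},y_i)=(y_i,b_i)$, $\tau(b_1,x_k)=(x_k,a_k)$, $\tau(b_1,y_p)=(y_p,b_p)$.

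The main obstacle, and the heart of the proof, is verifying that these monodromy lifts match the kneading automaton transitions \emph{on the nose}, not merely up to conjugacy within each vertex stabiliser. The difficulty is that the wreath recursion is only canonical once one has pinned down, for every vertex $v$, an identification of $\T_v$ with $\Z^*$; making these identifications globally consistent requires the spider legs and their pullbacks to be threaded through the tree coherently, so that the self-similarity isomorphisms $g\mapsto g_{|v}$ agree with the automaton's restriction maps. I would handle this by arguing inductively on tree level: the base case records the first-level translation $\rho(g)$ from the covering degree and branch structure of $f$ near $\Sing(f)=\{0\}$, and the inductive step uses the commuting-lift diagram from the iterated monodromy action together with Fact~3 (unique preimage legs landing at each preimage of a landing point) to show that the section at a child is again a lift of a loop around the appropriate post-singular point, hence again a generator of the stated type. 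Once the generators are shown to act as $a_1,\dots,a_k,b_1,\dots,b_p$, the image of $\phi$ is by definition the group generated by these automorphisms, which is $\Kgroup(x_1\dots x_k,y_1\dots y_p)$ after transport by $\Psi$, giving the asserted conjugacy.
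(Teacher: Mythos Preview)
Your proposal is correct and follows essentially the same route as the paper: fix a ray spider, use the sequence of iterated pullback spiders $\Spider_n$ and their dynamical partitions $\Ipart_{*,n}$ to build the $\Z$-tree isomorphism level by level, take loops winding once positively around each post-singular point (crossing exactly one spider leg) as free generators of $\pi_1(\C\setminus\PP(f),t)$, and verify that their lifts reproduce the transitions of Definition~\ref{def:kneading}. The paper's mechanism for the global coherence you flag is precisely this family of pullback spiders together with groupoid paths $g_n(w,w')$ and $g_{z,n}(w,w')$ (crossing no leg, respectively only the leg $\gamma_{z,n}$), whose explicit lifting behaviour under $f$ encodes the wreath recursion directly and makes the inductive matching automatic.
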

      In particular, for functions of the form $2 \pi i k \exp(z)$ with $k \in \Z \setminus \left\{ 0 \right\}$,
      the iterated monodromy action is conjugate to the action of the automata group $\Kgroup(0,k)$ discussed in Example~\ref{exa:Moore0k}.
      \begin{proof}
We choose a ray spider $\Spider_0$ for $f$ and consider the sequence
$\Spider_n$, where $\Spider_{n+1}$ is the pullback of $\Spider_{n}$. We denote
by $\gamma_{z,n}$ the ray in $\Spider_n$ landing at $z$, also let $\Ipart_{*,n}$ be
the dynamical partition induced by $\Spider_n$. 
Choose a base point $t \in \C \setminus \bigcup_{z \in \PP(f)} \gamma_{z,0}$.
We recursively define an isomorphism between the dynamical preimage tree $\T$ and
the standard $\Z$-tree $\Z^*$. We send the root $t$ to the empty word
$\emptyset$.
Suppose we already defined the bijection on $L_n \subset \T$, and let $w \in
L_n$ be mapped to $v \in \Z^n$. Then for the dynamical partition $\Ipart_{*,n}$,
there is exactly one child of $w$ in each component. We send the child lying in
$\Ipart_{m,n}$ to $vm$.

By construction, this defines an isomorphism of $\Z$-trees.

The complement of each ray spider is a simply connected domain. For two points
$w_1, w_2 \in \C \setminus \bigcup_{z \in \PP(f)} \gamma_{z,n}$ let
$g_n(w_1,w_2)$ be a path from $w_1$ to $w_2$ crossing no ray of $\Spider_n$
and let $g_{z,n}(w_1, w_2)$ be a path from $w_1$ to $w_2$ crossing only the ray
of $\gamma_{z,n}$ once in a positive sense (so that $g_n(w_1,w_2)$ composed with
$g_{z,n}(w_1,w_2)$ has winding number $1$ around $z$) and no other ray of
$\Spider_n$. The homotopy classes of $g_n$ are well defined in the fundamental
groupoid $\Pi_1(\C \setminus \PP(f))$.
Let us investigate the lifting behavior of these homotopy classes: let $w, w'
\in \C \setminus \bigcup_{z \in \PP(f)} \gamma_{z,n}$
and let $v \in
f^{-1}(w)$. Let $g^v_{n}(w, w')$ (or $g^v_{z,n}(w,w')$) denote the
lift of $g_{n}(w,w')$ (respectively $g_{z,n}(w,w')$).
Then $g^v_{n}(w,w')$ is a path in $\C$ meeting no preimage of $\gamma_{z,n}$
for $z \in \PP(f)$. Let $v'$ be the preimage of $w'$ in the same component of
$\Ipart_{*,n}$ as $v$. Then $g^v_{n}(w,w')$ must be homotopic to $g_{n+1}(v,v')$.
Similarly, $g^v_{0,n}(w,w)$ is a path which doesn't cross any ray of
$\Spider_{n+1}$, and as $g_{0,n}(w,w)$ has winding number 1 around 0, the lift
$g^v_{0,n}(w,w)$ must end in $v+ 2 \pi i$. Hence $g^v_{0,n}(w,w) \cong g_{n+1}(v,v+1)$
and by composition $g^v_{0,n}(w,w') \cong g_{n+1}(v,v'+1)$.
Let $z \in \PP(f) \setminus {0}$. Then $g^v_{z,n}(w,w')$ crosses no boundary of
$\Ipart_{*,n}$, so it must end in $w'$. If $\tilde z$, the preimage of $z$ in the same component
of $\Ipart_{*,n}$ is in $\PP(f)$, then $g^v_{z,n}(w,w') \cong g_{\tilde z,
  n+1}(v,v')$, otherwise $g^v_{z,n}(w,w') \cong g_{n+1}(v,v')$.

Now $\pi_1(\C \setminus \PP(f), t)$ is freely generated by $(g_{z,0}(t,t))_{z\in
  \PP(f)})$. Numerate $\PP(f)$ by $z_1 = 0, z_{i+1} = f(z_i), 1 \leq i \leq
n+k-1$. We claim that the group homomorphism given by
\begin{eqnarray}
  g_{z_i,0}(t,t) \mapsto a_i, 1 \leq i \leq k \\
  g_{z_i,0}(t,t) \mapsto b_{i-k} k + 1 \leq i \leq n+k 
\end{eqnarray}
conjugates the iterated monodromy action of $f$ to the action of
$\Kgroup(x_1\dots x_k, y_1 \dots y_p)$. 
This follows from the pullback behavior.

      \end{proof}
\section{Schreier Graphs}
For this section, we fix $x_1\dots x_k, y_1 \dots y_p$ with $x_k \not= y_p$.
We will give a combinatorial description of the action of
$\Kgroup(x_1 \dots x_k, y_1 \dots y_p)$ on the standard $\Z$-tree $\Z^*$.
We will work in this section with the generating set $S \coloneqq \left\{ a_1, \dots, a_k,
b_1, \dots, b_p  \right\}$ of $\Kgroup(x_1 \dots x_k, y_1 \dots y_p)$.
\begin{definition}
 Let $n \in \N$. The $n$-th level \emph{Schreier graph} has vertex set $\Z^n$
 and edges $v \rightarrow s(v)$ for $v \in \Z^n, s \in S$. The \emph{orbital Schreier graph} $\Gamma_\infty$ has the ends of
 the standard $\Z$-tree as vertex set (which can be identified with $\Z^\omega$) and 
 also has edges $v \rightarrow s(v)$ for $v \in \Z^\omega, s \in S$.
 
 The \emph{reduced Schreier graph} $\overline{\Gamma}_n$ and reduced orbital Schreier graph
 $\overline{\Gamma}_\infty$ are obtained by deleting
 all loops of $\Gamma_n$ respectively $\Gamma_\infty$.
\end{definition}
Let $w_m \in \Z^m$ be the reverse of the length $m$ prefix of $x_1\dots
x_k\overline{y_1\dots y_p}$. In the Moore diagram in Figure~\ref{fig:moore}, we
see that $w_m$ is the concatenation of the labels of the unique path $p$ of length m
ending in $a_1$. Let $c_m$ be the starting state of $p$ (so $c_m = a_m$ for $m
\leq k$, and $c_m = b_{m'}$ for appropriate $m'$ otherwise). Then $c_m|w_m =
a_1$ and $s|v \not= a_1$ for all other pairs of a state $s$ and $v \in \Z^m$.
As $a_1$ is the only state
 which acts non-trivially on the first level, we have
 \begin{eqnarray*}
   c_m|w_m(i) &=& i+1 \\
     s|v(i) &=& i \text{ for other pairs.}
 \end{eqnarray*}
Since additionally $a_1$ only restricts to the identity state, we also have that
if $s(v) = w$ with $v \not=w \in \Z^m$ for some state $s$, then $s(vi) = wi$ for
all $i \in \Z$. In fact $v$ and $w$ must differ in exactly one position.

This discussion can be summarized in the following lemma:
 \begin{lemma}
  The Schreier graph $\overline{\Gamma}_{m+1}$ can be obtained from
  $\overline{\Gamma}_{m}$ in the following way:
  take as vertex set $vx$ where $v \in \Z^m, x \in \Z$. 
  For edges we have the following two construction rules:
  \begin{itemize}
    \item $v \rightarrow v'$  edge in $\overline{\Gamma}_{m}$ \\ $\Rightarrow vi \rightarrow v'i$
      is an edge in $\overline{\Gamma}_{m+1}$ for all $i \in \Z$.
    \item $w_m i \rightarrow w_m (i+1)$ for
      all $i \in \Z$.
    \end{itemize}
 \begin{figure}
   \centering
    \begin{tikzpicture}[->,>=stealth,shorten >=1pt,auto,node distance=3cm,semithick]
        \tikzstyle{treesketch}=[ellipse,draw,dotted,minimum height=4.5cm, minimum width=2cm]
        \node (A) {$\cdots$};
        \node[state] (B) [right of=A] {$w_m0$};
        \node[state] (C) [right of=B] {$w_m1$};
        \node[state] (D) [right of=C] {$w_m2$};
        \node (E) [right of=D,] {$\cdots$};
        \node[treesketch] (Bt) [label={below:$\overline{\Gamma}_m0$}] at (B) {};
        \node[treesketch] (Ct) [label={below:$\overline{\Gamma}_m1$}] at (C) {};
        \node[treesketch] (TD) [label={below:$\overline{\Gamma}_m2$}] at (D) {};
        \tikzstyle{every node}=[node distance=2cm]
        \node (Ba) [above of=B] {$\rvdots$};
        \node (Ca) [above of=C] {$\rvdots$};
        \node (Da) [above of=D] {$\rvdots$};
        \node (Bb) [below of=B] {$\rvdots$};
        \node (Cb) [below of=C] {$\rvdots$};
        \node (Db) [below of=D] {$\rvdots$};
        \path
              (Ba) edge node {} (B)
              (B) edge node {} (Bb)
              (Ca) edge node {} (C)
              (C) edge node {} (Cb)
              (Da) edge node {} (D)
              (D) edge node {} (Db)
              (A) edge node {$c_m$} (B)
              (B) edge node {$c_m$} (C)
              (C) edge node {$c_m$} (D)
              (D) edge node {$c_m$} (E);
    \end{tikzpicture}
   \caption{Inductive construction of Schreier graphs}
   \label{fig:induct}
 \end{figure}
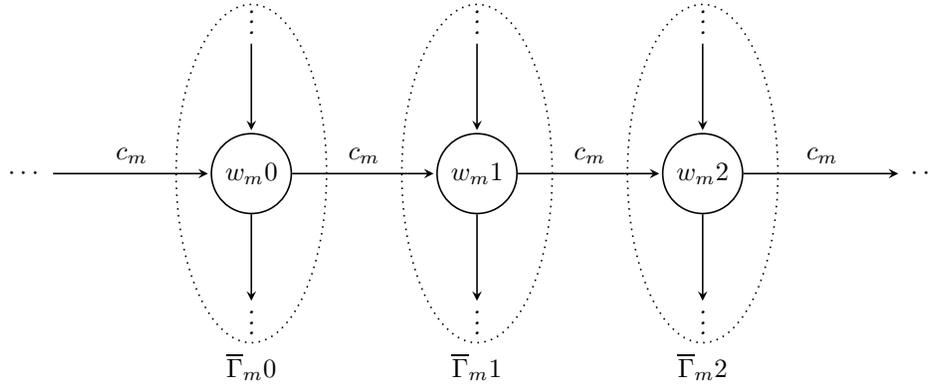
  See Figure~\ref{fig:induct} for a visualization of the construction rules.
	\label{lem:induct}
  \qed
 \end{lemma}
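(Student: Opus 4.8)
The plan is to extract from the explicit structure of the kneading automaton $\Kaut(x_1\dots x_k, y_1\dots y_p)$ exactly how the action on $\Z^{m+1}$ refines the action on $\Z^m$, and then translate this into the two edge-construction rules. The whole statement is really a bookkeeping consequence of the facts already assembled in the paragraph preceding the lemma, so my task is to organize those observations into the claimed inductive picture. The key structural input is that $a_1$ is the \emph{only} generator acting nontrivially on the first letter (by translation by $+1$), and that $a_1$ restricts only to the identity state; every generator restricts according to the single left-infinite path ending in it in the Moore diagram.

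First I would recall the section computation. For a word $vx \in \Z^{m+1}$ with $v \in \Z^m$ and $x \in \Z$, the cocycle relation gives, for any generator $s \in S$, the formula $s(vx) = s(v)\,s_{|v}(x)$. Now $s_{|v}$ is again a state of the automaton (possibly $\Id$), and by the preceding discussion $s_{|v}$ acts nontrivially on $\Z$ if and only if $s_{|v} = a_1$, which happens precisely when $(s,v) = (c_m, w_m)$; in that single case $s_{|v}(x) = x+1$, and in every other case $s_{|v}(x) = x$. This immediately splits the behavior into two regimes, which I would treat separately.

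Next I would establish the first construction rule. Suppose $v \ne v'$ in $\Z^m$ and $s(v) = v'$ for some $s \in S$, so that $v \to v'$ is an edge of $\overline{\Gamma}_m$ (it is not a loop since $v \ne v'$). Since $v \ne v'$, the pair $(s,v)$ cannot be $(c_m, w_m)$ acting to fix the first letter — more precisely, as noted in the text, $v$ and $w$ differ in exactly one position when $s(v)=w$, and the translation-by-one generator $c_m$ sends $w_m$ to $w_m$ fixing all letters, so whenever $s$ moves $v$ to a different $v'$ we have $s_{|v}(x) = x$ for all $x$. Hence $s(vx) = v'x$, giving the edge $vi \to v'i$ in $\overline{\Gamma}_{m+1}$ for every $i \in \Z$. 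This shows every nonloop edge of $\overline{\Gamma}_m$ lifts to a $\Z$-indexed family of parallel copies, exactly the first bullet.

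Finally I would produce the second rule and argue nothing else survives. The only generator–vertex pair for which the first letter moves is $(c_m, w_m)$, where $c_m(w_m) = w_m$ (the translation acts trivially on all but the last coordinate of the length-$m$ word, and by definition $c_m|w_m = a_1$ so the word itself is fixed while its child-letter is shifted); thus $c_m(w_m x) = w_m(x+1)$, yielding the edges $w_m i \to w_m(i+1)$ for all $i$, which is the second bullet. Conversely, any edge of $\overline{\Gamma}_{m+1}$ comes from some $s(vx) = s(v)\,s_{|v}(x)$ with $s(v)\,s_{|v}(x) \ne vx$; either $s(v) \ne v$, landing us in the first case, or $s(v)=v$ and $s_{|v}(x) \ne x$, forcing $(s,v)=(c_m,w_m)$ and the second case. \textbf{The main obstacle} is not any deep argument but careful handling of the loop-deletion passing to $\overline{\Gamma}$: I must check that the lifted parallel edges $vi \to v'i$ are genuinely nonloops (clear since $v \ne v'$) and that the new vertical edges $w_m i \to w_m(i+1)$ are nonloops (clear since $i \ne i+1$), so that no spurious loops are created or genuine edges discarded when I restrict attention to the reduced graphs. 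The precise identification of the unique path ending in $a_1$ with the prefix-reversal $w_m$, already done in the text, is what makes the second rule land on $w_m$ rather than some other vertex, and I would cite that computation rather than repeat it.
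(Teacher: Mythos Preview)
Your proposal is correct and follows essentially the same route as the paper, whose ``proof'' is simply the paragraph of computations immediately preceding the lemma (hence the bare \qed). You have organized those observations into the two-case split and added the explicit converse check that no other nonloop edges arise, which the paper leaves implicit.

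Two small wording issues worth cleaning up: (i) you call $c_m$ ``the translation-by-one generator'', but $c_m$ is not $a_1$; it is the generator whose section at $w_m$ equals $a_1$. (ii) Your parenthetical justification for $c_m(w_m)=w_m$ is slightly garbled. The clean argument is either to observe directly that every transition along the Moore path from $c_m$ to $a_1$ has the form $z\mid z$, so each letter of $w_m$ is fixed; or, contrapositively, to use the paper's remark that once a letter is changed the section becomes $\Id$, so $c_m(w_m)\neq w_m$ would force $c_m|_{w_m}=\Id$, contradicting $c_m|_{w_m}=a_1$. Neither affects the validity of your overall argument.
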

 \begin{example}
   We can use this construction to produce the first few $\overline{\Gamma}_{m}$ for the group
   $\Kgroup(0,1)$. As in Example~\ref{exa:Moore0k}, we name the generators $a$ and $b$ instead of
   $a_1$ and $b_1$. Note that $a$ acts by translation on the first level, and $b$ acts trivially on the first level, 
   so $\overline{\Gamma}_{1}$ is just a bi-infinite line. 
   To use the construction rule, we note that $w_1 = 0$, so we obtain
	 $\overline{\Gamma}_{2}$ as a comb in Figure~\ref{fig:sndlevel}.
	 \begin{figure}
		 \centering
		 \begin{tikzpicture}[->,>=stealth,shorten >=1pt,auto,node distance=2cm,semithick]
			 \node (Z-Z) {$\ddots$};
			 \foreach \nameB/\prevB/\labB in {A/Z/\overline{1},B/A/0,C/B/1,D/C/2}
			 \node (Z-\nameB) [below of = Z-\prevB] {$\cdots$};
			 \foreach \nameA/\prevA/\labA in {A/Z/\overline{1},B/A/0,C/B/1,D/C/2}
			 {
				 \node (\nameA-Z) [right of=\prevA-Z] {$\vdots$};
				 \foreach \nameB/\prevB/\labB in {A/Z/\overline{1},B/A/0,C/B/1,D/C/2}
				 \node[state] (\nameA-\nameB) [below of=\nameA-\prevB] {$\labA,\labB$};
				 \node (\nameA-E) [below of=\nameA-D] {$\vdots$};
			 }
			 \node (E-Z) [right of=D-Z] {$\iddots$};
			 \node (Z-E) [below of=Z-D] {$\iddots$};
			 \node (E-E) [right of=D-E] {$\ddots$};
			 \foreach \nameB/\prevB/\labB in {A/Z/\overline{1},B/A/0,C/B/1,D/C/2}
			 \node (E-\nameB) [below of = E-\prevB] {$\cdots$};
			 \tikzstyle{every node}=[font=\footnotesize]
			 \foreach \toA/\fromA in {A/Z,B/A,C/B,D/C,E/D}
			 \foreach \posB in {A,B,C,D}
			 \path[red]
			 (\fromA-\posB) edge node {} (\toA-\posB);

			 \foreach \toB/\fromB in {A/Z,B/A,C/B,D/C,E/D}
			 \path[blue]
			 (B-\fromB) edge  node {} (B-\toB);
			 \path[blue]
			 (C-B) edge [loop above] node {\color{red}$a$} (C-B)
			 (C-C) edge [loop above] node {$b$} (C-C);
		 \end{tikzpicture}
     \caption{Second-level Schreier-Graph of $\Kgroup(0,1)$ (where $\overline{1}=-1$ for notational convenience)}
   \label{fig:sndlevel}
	 \end{figure}
The loops at $1,0$ and $1,1$ are of course not present in the reduced Schreier graph,
but we did include them here for they are the loops which ``split up'' in the further generations:
as $b$ restricts to $a$ at $1,0$, we obtain $\overline{\Gamma}_{3}$ by connecting $\Z$ many copies of $\overline{\Gamma}_{2}$ by an bi-infinite line
going through the copies of $1,0$.
 \end{example}
 With this inductive description we can prove the following:
\begin{lemma}
  For all $m \in \N$, the reduced Schreier graph $\overline{\Gamma}_m$ is
  a tree with countably (or finitely) many ends.
\end{lemma}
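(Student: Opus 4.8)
The plan is to argue by induction on $m$, using the inductive construction of $\overline{\Gamma}_{m+1}$ from $\overline{\Gamma}_m$ provided by Lemma~\ref{lem:induct}. For the base case $m=1$, only $a_1$ acts non-trivially on the first level (by the translation $i \mapsto i+1$), while every other generator fixes each letter; hence $\overline{\Gamma}_1$ is a bi-infinite line, which is a tree with two ends. For the inductive step I will take as hypothesis the slightly stronger statement that $\overline{\Gamma}_m$ is a tree with \emph{at most countably many} ends, which comfortably covers both the ``finitely'' and the ``countably'' alternatives and makes the induction self-contained.

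For the inductive step, recall that Lemma~\ref{lem:induct} presents $\overline{\Gamma}_{m+1}$ as $\Z$ disjoint copies $C_i$ of $\overline{\Gamma}_m$ (the copy $C_i$ living on the words ending in the letter $i$), in which the edges of $\overline{\Gamma}_m$ are reproduced internally, together with the extra ``spine'' edges $e_i$ joining $w_m i \in C_i$ to $w_m(i+1) \in C_{i+1}$. To see that $\overline{\Gamma}_{m+1}$ is a tree I would first check connectedness: each $C_i$ is connected by the induction hypothesis, and the spine edges link consecutive copies, so the whole graph is connected. For acyclicity, the key observation is that each spine edge $e_i$ is a \emph{bridge}: deleting it separates the copies with index $\le i$ from those with index $\ge i+1$, since every internal edge stays inside a single copy and every other spine edge joins two copies on the same side. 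A bridge lies on no cycle, so any hypothetical cycle would use only internal edges and therefore lie inside a single copy $C_i$, contradicting that $C_i$ is a tree. One should also note in passing that no parallel edges are created, since spine edges join vertices with different last letters while internal edges preserve the last letter, so $\overline{\Gamma}_{m+1}$ stays a simple graph.

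For the count of ends I would classify each end of $\overline{\Gamma}_{m+1}$, represented by a ray from the basepoint $w_m 0$, according to how many spine edges it uses. Because each $e_i$ is a bridge and the spine is a line, a ray using infinitely many spine edges is forced to travel monotonically along the spine (a ``dip'' into a copy could only return through an already-visited vertex), so such rays yield exactly the two spine ends $\xi_+$ and $\xi_-$. A ray using only finitely many spine edges is eventually trapped inside a single copy $C_i$ and hence determines an end of $C_i$. This gives a bijection $\partial \overline{\Gamma}_{m+1} \cong \{\xi_+,\xi_-\} \sqcup \bigsqcup_{i \in \Z} \partial C_i$: ends coming from different copies, or from a copy versus the spine, can never share a tail and so stay distinct. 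Since there are countably many copies and each $\partial C_i$ is at most countable by the induction hypothesis, the right-hand side is at most countable, completing the step.

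I expect the end-counting to be the main obstacle, specifically the rigorous justification that the ends do not merge when the copies are glued along the spine --- i.e.\ that the classification above is genuinely a disjoint union --- together with the ``eventually monotone along the spine'' claim, both of which rest on the bridge property established for the tree part. The tree property itself, once the bridge observation is in place, is routine.
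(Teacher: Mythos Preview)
Your proposal is correct and follows essentially the same route as the paper: induction on $m$ via Lemma~\ref{lem:induct}, the base case being the bi-infinite line, and the identification of $\partial\overline{\Gamma}_{m+1}$ with two spine ends together with a $\Z$-indexed disjoint union of copies of $\partial\overline{\Gamma}_m$. Your bridge argument for acyclicity is in fact more explicit than the paper's (which simply asserts that gluing copies of a tree along a line through single points yields a tree), while the paper additionally records the topology on $\partial\overline{\Gamma}_{m+1}$ as a two-point compactification of $\Z\times\partial\overline{\Gamma}_m$; this extra topological information is not needed for the countability claim itself.
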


\begin{proof}
  We do induction over m. For $m = 1$, the Schreier graph $\overline{\Gamma}_1$ is a bi-infinite line, so it is in
  particular a tree with finitely many ends. Now by
	Lemma~\ref{lem:induct}, $\overline{\Gamma}_{m+1}$ is
  the union of countably many copies of $\overline{\Gamma}_m$ and a bi-infinite
  line intersecting each copy in one point. So it is again a tree.
	We claim that have the following inductive description of the space of ends:
	\begin{equation}
		\partial \overline{\Gamma}_{m+1} \cong \Z \times \partial
		\overline{\Gamma}_{m} \cup \left\{ -\infty, +\infty \right\}
		\label{eq:endinduct}
	\end{equation}
  Here the right hand space is a compactification of $\Z \times
	\partial \overline{\Gamma}_{m}$, where $-\infty$ has the open sets
	$U_{<n} \coloneqq \left\{z \in \Z \colon z < n\right\} \times \partial
	\overline{\Gamma}_{m} \cup \left\{-\infty\right\}$ as neighborhood
	basis, and similarly $+\infty$ has the open sets $U_{>n} \coloneqq
	\left\{z \in \Z \colon z > n\right\} \times \partial
	\overline{\Gamma}_{m} \cup \left\{+\infty\right\}$ as neighborhood
	basis.
	The identification in \eqref{eq:endinduct} works as follows: we take
	$w_m$ as our root of $\overline{\Gamma}_{m}$ and $w_m0$ as the root of  
	$\partial \overline{\Gamma}_{m+1}$. Then we have the following
	identifications:
  \begin{itemize}
		\item We send $-\infty$ to the end $(w_m(-i))_{i \in \N}$, i.e.\  we
			walk the bi-infinite line in the negative direction.
		\item We send $+\infty$ to the end $(w_m(+i))_{i \in \N}$, i.e.\  we
			walk the bi-infinite line in the positive direction.
    \item Given a pair $(z, v) \in \Z \times \partial
			\overline{\Gamma}_{m}$, we identify it with the end which is given
			by the concatenation of the path from $w_m0$ to $w_mz$ together
			with the sequence $v_nm$. This means that first walk to the root
			of the copy of $\overline{\Gamma}_{m}$ labeled by $z$, and then
			go the end defined by the sequence $v$ in this copy.
\end{itemize}
Using Lemma~\ref{lem:induct}, it is easy to check that this indeed
defines a homeomorphism as given in \eqref{eq:endinduct}.
Now is a $\Z \times \partial
		\overline{\Gamma}_{m} \cup \left\{ -\infty, +\infty \right\}$ is
countable union of countable set, so $\partial
		\overline{\Gamma}_{m+1}$ is countable.
\end{proof}
Let us fix some notation related to the orbital Schreier graph
$\overline{\Gamma}_\omega$. For $u \in \Z^\omega$, let $T_m(u)$ be the induced
subgraph of $\overline{\Gamma}_\omega$ on the set $\left\{ u' \in \Z^\omega
\colon u_i = u'_i\text{ for all }i > m  \right\}$. We denote the union $\bigcup_{m\in
\N} T_m(u)$ by $T(u)$.
\begin{theorem}
  The connected component of $u$ in $\overline{\Gamma}_\omega$ is $T(u)$. It is
  a tree with countably many ends.
  \label{thm:schreier}
\end{theorem}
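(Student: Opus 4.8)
The plan is to establish Theorem~\ref{thm:schreier} by combining the finite-level structure from the preceding lemma with a limiting argument, identifying $T(u)$ explicitly as a directed union of the finite reduced Schreier graphs sitting inside $\overline{\Gamma}_\omega$. First I would verify that $T(u)$ is exactly the connected component of $u$. The key observation is that every generator $s \in S$ is an element of $\aut_\Z(\Z^*)$ of bounded activity (Remark~\ref{rem:boundedactivity}), so each $s$ changes only finitely many letters of any infinite word; concretely, from the discussion preceding Lemma~\ref{lem:induct}, when $s(v)=w$ with $v \neq w$ at level $m$, then $v$ and $w$ differ in exactly one position and $s(vi)=wi$. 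This means a single generator applied to $u \in \Z^\omega$ changes only finitely many initial letters and fixes the tail, so $s(u) \in T_m(u)$ for suitable $m$; hence the whole orbit of $u$ stays in $T(u)$, giving the containment of the component in $T(u)$. Conversely, I would show any $u' \in T(u)$ is reachable: since $u'$ agrees with $u$ past position $m$, the truncations to level $m$ lie in the same $\overline{\Gamma}_m$-component, which by the previous lemma is connected, and this path lifts to a path in $\overline{\Gamma}_\omega$ joining $u$ to $u'$.

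Next I would prove that $T(u)$ is a tree. The natural approach is to realize $T(u) = \bigcup_m T_m(u)$ as an increasing union where each $T_m(u)$ is isomorphic to $\overline{\Gamma}_m$ (via the identification $u' \mapsto (u'_m u'_{m-1}\cdots u'_1)$ of the varying prefix with a vertex of $\Z^m$, with the tail fixed). Under this identification $T_m(u)$ is a copy of the reduced finite Schreier graph $\overline{\Gamma}_m$, which is a tree by the preceding lemma, and the inclusion $T_m(u) \hookrightarrow T_{m+1}(u)$ is exactly the embedding of one copy of $\overline{\Gamma}_m$ into $\overline{\Gamma}_{m+1}$ described in Lemma~\ref{lem:induct}. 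Since an increasing union of trees along injective graph embeddings that add no cycles is again a tree, $T(u)$ is a tree. I would note that acyclicity passes to the union because any cycle, being finite, lives entirely in some $T_m(u)$, which is acyclic.

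Finally, for the count of ends I would exploit the inductive end description \eqref{eq:endinduct} from the previous proof. The ends of $T(u)$ split into two types: those whose defining sequence eventually stabilizes in the prefix directions (these are governed, at each stage, by the $\Z$-indexed copies together with the two points $\pm\infty$ of the bi-infinite spine) and the distinguished end heading ``outward'' along increasing $m$. The clean way is to observe that $\partial T(u)$ is the inverse/direct limit induced by \eqref{eq:endinduct}: each stage contributes at most the countable set $\partial \overline{\Gamma}_m$ together with the two spine ends, and the limit of countably many countable contributions is countable. Since each $\overline{\Gamma}_m$ has countably many ends, and there are countably many levels, $\partial T(u)$ is a countable union of countable sets, hence countable.

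\textbf{The main obstacle} I anticipate is the passage from finite to infinite levels for the ends, specifically handling the single ``escaping'' end that runs along the direction of increasing $m$ (the end of $u$ itself in $\overline{\Gamma}_\omega$) and ensuring no new ends are created or collapsed in the limit. The bounded-activity property is what controls this: because each generator moves only boundedly many letters, the spine structure in Figure~\ref{fig:induct} propagates coherently through all levels, so the bi-infinite lines added at successive stages assemble into a controlled end structure rather than proliferating. Verifying that the colimit of the homeomorphisms in \eqref{eq:endinduct} yields exactly a countable boundary — and that $T(u)$ has no leaves so $\partial T(u)$ is genuinely the end space of a tree without leaves — is the delicate point, but it reduces to a routine check once the increasing-union-of-trees picture is in place.
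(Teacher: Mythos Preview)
Your treatment of the first two claims --- that $T(u)$ is the connected component of $u$ and that $T(u)$ is a tree as an increasing union of the $T_m(u)\cong\overline{\Gamma}_m$ --- matches the paper's approach (indeed, the paper is terser on the component claim, so your explicit two-way inclusion is fine).

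The gap is in the end count. You attempt to invoke \eqref{eq:endinduct}, but that identity describes $\partial\overline{\Gamma}_{m+1}$ in terms of $\partial\overline{\Gamma}_{m}$, not the ends of the infinite union $T(u)$. An end of $T(u)$ either eventually stays inside some $T_m(u)$ --- and those you correctly count as $\bigcup_m \partial T_m(u)$, a countable union of countable sets --- or it leaves every $T_m(u)$. For the second kind you assert there is a ``single escaping end'', but this is neither proved nor obviously true: a ray leaving $T_m(u)$ may use \emph{any} edge of $T(u)$ exiting $T_m(u)$, and such edges can jump directly from $T_m(u)$ into $T_{m'}(u)\setminus T_{m'-1}(u)$ for $m'>m+1$ (a generator may alter a letter arbitrarily far down the word). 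So the escaping rays are not organized solely by the spine in \eqref{eq:endinduct}, and ``a countable union of countable contributions'' does not control them; a priori the escaping ends could be uncountable.

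The paper closes this as follows. Let $E_m$ be the set of edges of $T(u)$ leaving $T_m(u)$. An escaping end determines, for each $m$, the unique edge in $E_m$ lying on the geodesic from $u$ toward that end, and these choices are compatible with the natural maps $E_m \to E_{m-1}$; hence the escaping ends inject into $\varprojlim E_m$. An edge in $E_m$ arises from a pair $(v,q)\in\Z^m\times(S\cup S^{-1})$ with $q_{|v}\neq\Id$, and bounded activity (Remark~\ref{rem:boundedactivity}) bounds the number of such pairs \emph{uniformly in $m$}. An inverse limit of finite sets of uniformly bounded cardinality is finite, so there are only finitely many escaping ends. This is the step you are missing: bounded activity must be used not merely to say each generator changes finitely many letters (which you already used for the component claim) but to bound $|E_m|$ independently of $m$.
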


\begin{proof}
  The projection to the prefix of length $m$ is a bijection from the vertex set
  of $T_m(u)$ to $\Z^m$. It gives rise to graph isomorphism from $T_m(u)$ to
  $\overline{\Gamma}_m$, as the generating set acts by changing at most one
  letter at once. So $T(u)$ is an increasing union of trees, hence it is
  also a tree.

  Each end of $T(u)$ either stays in some $T_m(u)$ or leaves all $T_m(u)$. The
  first kind is a countable union of countable sets, hence we only need to
  consider ends leaving all $T_m(u)$. Let $E_m$ be the set of edges in $T(u)$
  leaving $T_m(u)$. We have a map $E_m \rightarrow E_{m-1}$ which sends an edge $e$
  leaving $T_m(u)$ to the unique edge leaving $T_{m-1}(u)$ on the geodesic from
  $u$ to $e$. It is possible that an edge is send to itself, if it leaves
  multiple subtrees at once. Now the set of ends leaving all $T_m(u)$ is
  isomorphic to $\varprojlim E_m$.  
  Now the sets $E_m$ have uniform bounded cardinality. This can be seen as
  follows:
  Let $w$ be the $m$-suffix of $u$. Then an edge in $E_m$ corresponds to a pair
  $v \in \Z^m, q \in S \cup S^{-1}$ with $q|v(w) \not= w$, in particular the
  restriction $q|v$ is not trivial. But $\Kaut(x_1\dots x_k, y_1 \dots y_p)$ is
  a bounded activity automaton, so the number of pairs $(v,q) \in \Z^m \times
  \left( S \cup S^{-1} \right)$ with $q|v \not= \Id$ is uniformly bounded, and
  so are the sets $E_m$.
  Hence the inverse limit has finite cardinality, so in total we have countably
  many ends.
  
\end{proof}

\section{Group theoretic properties}
The groups $\Kgroup(x_1\dots x_k, y_1 \dots y_p)$ are examples of ZC-groups
defined as \cite{oliynyk2010groups}. In particular, they are left-orderable residually solvable groups.

In this section, we will always work with a fixed pair of sequences $x_1\dots x_k, y_1 \dots y_p$
and we will just write $\Kgroup$ instead of $\Kgroup(x_1\dots x_k, y_1 \dots y_p)$. We still use
$S \coloneqq \left\{ a_1, \dots, a_k, b_1, \dots, b_p  \right\}$ as our generating set.

\begin{lemma}
  The abelianization of $\Kgroup(x_1\dots x_k, y_1 \dots y_p)$ is the free
  abelian group on $x_1,\dots x_k, y_1 \dots y_p$.
\end{lemma}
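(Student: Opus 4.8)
The plan is to construct a single surjective homomorphism $\Sigma \colon \Kgroup \to \Z^{k+p}$ that carries the generating set $S$ bijectively onto the standard basis; combined with the fact that $\Kgroup$ is generated by the $k+p$ elements of $S$, this pins down the abelianization exactly. The building blocks are the \emph{level-$n$ activity} maps. For $g \in \aut_\Z(\Z^*)$ and $n \in \N$ I set $\sigma_n(g) \coloneqq \sum_{v \in \Z^n} \rho(g_{|v})$. First I would check this is well defined and a homomorphism: the sum is finite because every element of $\Kgroup$ has finite activity on each level (the generators are bounded activity by Remark~\ref{rem:boundedactivity}, and finite activity on a fixed level is preserved under products and inverses, so $\Kgroup \subset \aut_\Z(\Z^*)$); and the cocycle identity \eqref{eqn:cocycle} gives $\rho((gh)_{|v}) = \rho(g_{|h(v)}) + \rho(h_{|v})$, so after reindexing the first sum by the level-$n$ bijection $v \mapsto h(v)$ one obtains $\sigma_n(gh) = \sigma_n(g) + \sigma_n(h)$.

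Next I would evaluate $\sigma_m$ on the generators. Since $\rho$ vanishes on $\Id$ and on every state except $a_1$ (where it equals $1$), for a state $s$ we have $\sigma_m(s) = \#\{v \in \Z^m \colon s_{|v} = a_1\}$. The analysis preceding Lemma~\ref{lem:induct} shows that there is a unique length-$m$ path in the Moore diagram ending in $a_1$, starting from the state $c_m$ and reading the word $w_m$; equivalently $(c_m)_{|w_m} = a_1$ while $s_{|v} \ne a_1$ for every other pair $(s,v)$ with $v \in \Z^m$. Hence $\sigma_m(s) = 1$ if $s = c_m$ and $\sigma_m(s) = 0$ otherwise. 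Tracing the section structure of the automaton, the states $c_0, c_1, \dots, c_{k+p-1}$ are exactly $a_1, a_2, \dots, a_k, b_1, b_2, \dots, b_p$, each occurring once (the sequence $c_m$ walks back along the $a$-chain and then cyclically around the $b$-cycle, mirroring the preperiod $k$ and period $p$ of the kneading sequence).

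I would then assemble $\Sigma \coloneqq (\sigma_0, \sigma_1, \dots, \sigma_{k+p-1}) \colon \Kgroup \to \Z^{k+p}$. By the previous paragraph $\Sigma$ sends $a_1, \dots, a_k, b_1, \dots, b_p$ to the distinct standard basis vectors $e_0, \dots, e_{k+p-1}$, so $\Sigma$ is surjective. Since $\Z^{k+p}$ is abelian, $\Sigma$ factors through the abelianization, giving a surjection $\Kgroup^{ab} \twoheadrightarrow \Z^{k+p}$. On the other hand $\Kgroup^{ab}$ is generated by the $k+p$ images of $S$, hence is a quotient of $\Z^{k+p}$. Composing $\Z^{k+p} \twoheadrightarrow \Kgroup^{ab} \twoheadrightarrow \Z^{k+p}$ yields a surjective endomorphism of $\Z^{k+p}$, which is an isomorphism because $\Z^{k+p}$ is Hopfian; therefore both surjections are isomorphisms and $\Kgroup^{ab} \cong \Z^{k+p}$, freely generated by the images of $a_1, \dots, a_k, b_1, \dots, b_p$ (identified with $x_1, \dots, x_k, y_1, \dots, y_p$).

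The routine parts are the bookkeeping in the second paragraph, which just re-reads the unique path to $a_1$ already identified in the Schreier-graph analysis. The one step that needs genuine care, and which I expect to be the main obstacle, is the well-definedness and homomorphism property of $\sigma_n$: namely controlling finiteness of the activity so the defining sum converges, and correctly reindexing the cocycle sum by the level-$n$ action. Once that is in place the rest reduces to the Hopfian argument, so I anticipate no further difficulty.
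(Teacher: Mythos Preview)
Your proof is correct and follows essentially the same approach as the paper: the maps $\sigma_n$ are exactly the paper's $\overline{\rho}_n$, and both arguments assemble the first $k+p$ of them to show that the generators map to the standard basis of $\Z^{k+p}$. The only cosmetic differences are that you compute $\sigma_m$ on generators via the unique-path-to-$a_1$ analysis of Section~5 rather than via the recursive relations the paper writes out, and you spell out the final Hopfian step where the paper simply asserts the induced isomorphism.
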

\begin{proof}
  We have a family of group homomorphisms
  \begin{eqnarray*}
    \overline \rho_n \colon \aut_\Z(\Z^*) & \rightarrow & \Z  \\
    g & \mapsto & \sum_{v \in \Z^n} \rho(g_{|v})
  \end{eqnarray*}
  Note that the sum is defined as $g_{|v}$ is trivial for almost all $v$, so almost all summands
  are $0$. By the cocycle equations~\ref{eqn:cocycle} we see that $\overline \rho_n$ is indeed a group homomorphism,
  and for all $g \in \aut_\Z(\Z^*)$, we have $\overline \rho_{n + 1} (g) = \sum_{v \in \Z} \overline \rho_n (g_{|v})$.
    The transition functions given in the Definition~\ref{def:kneading} translate to
  \begin{eqnarray*}
    \overline \rho_0(a_1) &=& 1 \\
    \overline \rho_0(s) &=& 0 \text{~for all~} s \in S \setminus (a_0)\\
    \overline \rho_{n+1}(a_{i+1}) &=& \overline \rho_{n} (a_i) \\
    \overline \rho_{n+1}(b_{1}) &=& \overline \rho_{n} (a_k) + \overline \rho_{n} (b_p) \\
    \overline \rho_{n+1}(b_{j+1}) &=& \overline \rho_{n} (a_j) \\
  \end{eqnarray*}
    If we collect $\overline \rho_{0}, \dots \overline \rho_{k+p-1}$ to a group
    homomorphism $\overline \rho \colon \aut_\Z(\Z^*) \rightarrow \Z^{k+p}$, we
    can show row by row that $\left( \overline \rho(a_1), \dots , \overline
    \rho(a_k), \overline \rho (b_1), \dots , \overline \rho(b^k)\right) \in
    \Z^{(k+p)\times(k+p)}$ is the identity matrix. So $\overline \rho$ induces
    an isomorphism between the abelianization of $\Kgroup$ and $\Z^{ k+p }$.
\end{proof}
\begin{lemma}
  $\Kgroup$ surjects onto the restricted wreath product $\Z \wr \Z$. In particular, $\Kgroup$ is of exponential growth.
  \label{lem:growth}
\end{lemma}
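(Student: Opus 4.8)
The plan is to exhibit an explicit surjective homomorphism $\psi \colon \Kgroup \to \Z \wr \Z$ and then read off the growth statement. Realize the restricted wreath product as the semidirect product $W \coloneqq \left(\bigoplus_{x \in \Z}\Z\right)\rtimes \Z$ in which $\Z$ acts on the base group by the shift, with multiplication $(f,n)(f',n') = (\tau_{n'}f + f', n+n')$ where $(\tau_{n'}f)(x) = f(x+n')$. Granting the surjection, the growth conclusion is immediate: $\Z \wr \Z$ is finitely generated, solvable but not virtually nilpotent, so by Milnor--Wolf it has exponential growth (one may also count lamp configurations directly). Since the images of a generating set of $\Kgroup$ generate $W$ and word length cannot increase under a quotient map, the growth function of $W$ is dominated by that of $\Kgroup$; as $\Kgroup$ is finitely generated it has at most exponential growth, so the surjection forces exponential growth exactly.

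To construct $\psi$, I would use the depth-one truncation provided by the wreath recursion. Set $\psi(g) = (\beta(g),\rho(g))$, where $\rho$ is the first-level translation homomorphism and $\beta(g)\in\bigoplus_{x\in\Z}\Z$ is the configuration $\beta(g)(x)\coloneqq \rho(g_{|x})$. This is well defined because every $g\in\Kgroup\subset\aut_\Z(\Z^*)$ has finite activity on the first level, so only finitely many sections $g_{|x}$ are nontrivial and $\beta(g)$ has finite support, landing in the \emph{restricted} product. The homomorphism property reduces to the cocycle equation~\eqref{eqn:cocycle}: since $(gh)_{|x}=g_{|h(x)}\,h_{|x}$ and $h$ acts on the first letter by $x\mapsto x+\rho(h)$, applying the homomorphism $\rho$ gives $\beta(gh)(x) = \beta(g)(x+\rho(h)) + \beta(h)(x)$, which is precisely the base component of $\psi(g)\psi(h)$ in $W$. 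Matching the direction of the shift here is the one place where care is needed; the opposite convention yields the opposite group, still isomorphic to $\Z\wr\Z$, so this is bookkeeping rather than a genuine obstacle.

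For surjectivity I would argue as follows. Because $a_1$ has all sections trivial and $\rho(a_1)=1$, we get $\psi(a_1)=(0,1)=:t$, the generator of the acting $\Z$. It then suffices to produce a single element of the form $(\delta_{x_0},0)$, since conjugating by $t$ gives $(\delta_{x_0+n},0)$ for all $n\in\Z$, and these together with $t$ generate all of $W$; equivalently, a single lamp is a unit of $\Z[t^{\pm1}]$ and so generates the base as a $\Z[t^{\pm1}]$-module. Such a generator is always present: if $k\ge 2$ then $a_2$ has the unique nontrivial section $(a_2)_{|x_1}=a_1$, whence $\beta(a_2)=\delta_{x_1}$; if $k=1$ then $b_1$ has sections $a_1$ at $x_1=x_k$ and $b_p$ at $y_p$, and since $\rho(b_p)=0$ while $x_k\ne y_p$, only the first contributes, again giving $\beta(b_1)=\delta_{x_1}$. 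In both cases the first-level action of the chosen generator is trivial, so its image is exactly the required single-lamp element, and $\psi$ is onto.

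The main obstacle I anticipate is the surjectivity step, not the homomorphism check. One must verify that the image of the base is genuinely all of $\bigoplus_\Z\Z$ rather than some proper $t$-invariant subgroup, which is exactly why isolating a \emph{single-lamp} generator (a unit of the Laurent ring) is essential, and why the degenerate case $k=1$ must be handled separately using $b_1$ together with the standing hypothesis $x_k\ne y_p$.
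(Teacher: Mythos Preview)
Your proof is correct and follows essentially the same approach as the paper: the map $\psi(g)=(\beta(g),\rho(g))$ you construct is precisely the homomorphism induced by the action of $\Kgroup$ on the second level $\Z^2$, and your identification of $a_1$ and $a_2$ (respectively $b_1$ when $k=1$) as hitting the standard generating set of $\Z\wr\Z$ matches the paper's argument verbatim. The paper states this in one line, whereas you unpack the wreath recursion, verify the homomorphism property via the cocycle equation, and handle the $k=1$ case explicitly using $\rho(b_p)=0$ and $x_k\ne y_p$; this added detail is all correct and does not deviate from the intended route.
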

\begin{proof}
  The action on $\Z^2$ gives a map to $\Z \wr \Z$. We see that $a_1$ and $a_2$ respectively $b_1$ if $k = 1$ are mapped to the standard generating
  set of $\Z \wr \Z$, so we have a surjection. As $\Z \wr \Z$ has
  exponential growth (see \cite{parry1992,Bucher2017} for a detailed discussion),	$\Kgroup$ also has
  exponential growth. 
\end{proof}
\begin{lemma}
  The group $\Kgroup(x_1\dots x_k, y_1 \dots y_p)$ is level-transitive
	and self-replicating. For the derived subgroup $\Kgroup' \subset
	\Stab_\Kgroup$ we have the following: under the map $\Stab_\Kgroup
	\hookrightarrow \bigoplus_{x \in \Z} \Kgroup$ induced by the wreath
	recursion, the image of $\Kgroup'$ contains $\bigoplus_{x \in \Z}
\Kgroup'$ and the composition
\begin{equation}
	\Kgroup' \hookrightarrow \Stab_\Kgroup \hookrightarrow \bigoplus_{x
	\in \Z} \Kgroup \rightarrow \Kgroup
	\label{eq:surj}
\end{equation}
is surjective, where the last map is the projection map to any summand.
  \label{lem:branch}
\end{lemma}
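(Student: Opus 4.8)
The plan is to deduce all three assertions from a single recurrence statement: for every $x \in \Z$ the \emph{section homomorphism}
$\phi_x \colon \Stab_\Kgroup \to \Kgroup$, $g \mapsto g_{|x}$, is surjective. It is a homomorphism because an element of $\Stab_\Kgroup$ fixes every letter on the first level, so $(gh)_{|x}=g_{|h(x)}h_{|x}=g_{|x}h_{|x}$; and it lands in $\Kgroup$ by self-similarity. First I would record the structural facts I will use repeatedly: since $\Z$ is abelian we have $\Kgroup' \subseteq \ker\rho = \Stab_\Kgroup$, and $\Stab_\Kgroup(x)=\Stab_\Kgroup$ for all $x$. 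Since $a_1$ restricts trivially and translates the first level by $1$, a direct cocycle computation gives $(a_1^{-m} h a_1^{m})_{|z}=h_{|z+m}$ for \emph{any} $h$; in particular conjugation by $a_1$ \emph{translates the support of the sections}, and $\phi_x(a_1 g a_1^{-1})=\phi_{x-1}(g)$. Because the only nontrivial sections of the generators are $(a_{i+1})_{|x_i}=a_i$, $(b_1)_{|x_k}=a_k$, $(b_{j+1})_{|y_j}=b_j$ and $(b_1)_{|y_p}=b_p$, this translation trick moves each generator into the section at an arbitrary coordinate, proving $\phi_x(\Stab_\Kgroup)=\Kgroup$ for every $x$.

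The first two assertions are then immediate. Self-replicating is precisely surjectivity of $\phi_x$ (any $g$ is realized as $h_{|x}$ with $h\in\Stab_\Kgroup$), and by the remark after the definition it suffices to check this on the first level. Level-transitivity follows by the standard two-step induction: $a_1^{\,n}$ already acts transitively on the first level, and given transitivity on $\Z^n$ one lifts any $\bar h\in\Kgroup$ carrying a length-$n$ suffix to its target to an $h\in\Stab_\Kgroup$ with $h_{|x}=\bar h$, composing the two maps to move an arbitrary vertex of $\Z^{n+1}$ to any other.

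For the surjectivity of the composition in \eqref{eq:surj} I would first note that $\phi_x(\Kgroup')$ is \emph{independent of} $x$, since conjugation by $a_1$ preserves $\Kgroup'$ and intertwines $\phi_x$ with $\phi_{x-1}$. For a generic $x$ (one avoiding the finite set $\{x_1,\dots,x_k\}\cup\{y_1,\dots,y_p\}$ at which some state has a nontrivial restriction) every generator other than $a_1$ lies in $\Stab_\Kgroup$ and has $\phi_x$-image $\Id$; as the abelianization lemma gives $\Stab_\Kgroup=\langle \Kgroup', a_2,\dots,a_k,b_1,\dots,b_p\rangle$, this forces $\phi_x(\Kgroup')=\phi_x(\Stab_\Kgroup)=\Kgroup$, and independence of $x$ upgrades this to all $x$. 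For the inclusion $\bigoplus_x \Kgroup'\subseteq \mathrm{im}(\Kgroup')$ it suffices, for each fixed $x$, to place an isomorphic copy of $\Kgroup'$ into the $x$-th coordinate with all others trivial. The mechanism is that for $A,B\in\Stab_\Kgroup$ one has $\mathrm{supp}([A,B])\subseteq\mathrm{supp}(A)\cap\mathrm{supp}(B)$, since $\phi_z[A,B]=[\phi_zA,\phi_zB]$, so a commutator \emph{localizes} the support to the intersection. Writing $\Kgroup'$ as the normal closure of the commutators $[s,s']$ of generators, I realize each $[s,s']$ as a clean single-coordinate element of $\Kgroup'$ at $x$ by choosing $A,B\in\Stab_\Kgroup$ whose supports meet only in $\{x\}$ with $\phi_xA=s$, $\phi_xB=s'$; the needed conjugates $g[s,s']g^{-1}$ are obtained by conjugating such a clean element by a $G\in\Stab_\Kgroup$ with $\phi_x(G)=g$, which keeps it in $\Kgroup'$ and, being a conjugation inside $\Stab_\Kgroup$, keeps its support in $\{x\}$.

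The main obstacle is the very last construction, concentrated in the two generators $a_k$ and $b_p$: they occur \emph{only} as the two simultaneous sections $(b_1)_{|x_k}=a_k$ and $(b_1)_{|y_p}=b_p$ of the single state $b_1$, so no translated generator realizes either of them with single-point support. The resolution is that these two sections sit at the fixed nonzero offset $d:=y_p-x_k\neq 0$ (this is exactly where the standing hypothesis $x_k\neq y_p$ is used): translating copies of $b_1$ by multiples of $d$ and pairing them so that their two-point supports overlap in exactly one coordinate lets commutator-localization produce clean single-coordinate realizations of $[a_k,\,\cdot\,]$, $[b_p,\,\cdot\,]$ and $[a_k,b_p]$. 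Together with the clean realizers of the remaining (unbundled) generators these cover all commutators $[s,s']$, and hence all of $\Kgroup'_{(x)}$, completing the argument.
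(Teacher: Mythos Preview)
Your proof is correct and follows the same overall architecture as the paper: establish self-replication via the translation trick with $a_1$, derive level-transitivity by induction, and for the branching statement reduce to producing each $[s,s']@0$ inside $\Kgroup'$ via commutators of suitably translated generators. The one genuine difference is your argument for the surjectivity of \eqref{eq:surj}: the paper simply observes that every generator of $\Kgroup$ occurs as a section of a commutator $[c,a_1^{N}]$ for some generator $c$ and large $N$, whereas you argue structurally that for a generic coordinate $x$ the generators $a_2,\dots,a_k,b_1,\dots,b_p$ of $\Stab_\Kgroup$ modulo $\Kgroup'$ have trivial $x$-section, forcing $\phi_x(\Kgroup')=\phi_x(\Stab_\Kgroup)=\Kgroup$, and then transport this to all $x$ by $a_1$-conjugation. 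Your route is cleaner and makes explicit use of the abelianization lemma; the paper's is more hands-on. As for the ``obstacle'' you flag with $a_k$ and $b_p$: it is less severe than you suggest, and indeed the paper does not single it out. Taking $A=a_1^{-x_k}b_1 a_1^{x_k}$ and $B=a_1^{-y_p}b_1 a_1^{y_p}$ gives two-point supports $\{0,\,y_p-x_k\}$ and $\{0,\,x_k-y_p\}$, which already meet only in $\{0\}$ since $x_k\neq y_p$ forces $y_p-x_k\neq x_k-y_p$; no iterated translation by multiples of $d$ is needed. This is exactly the paper's one-line computation $[a^{-z}ca^{z},a^{-w}da^{w}]=[s,t]@0$.
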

\begin{proof}
  Note that $a \coloneqq a_1$ acts just by translations on the first level, and every
  generator is the section of another generator. This already implies
  level-transitive and self-replicating.
	To show that the composition \eqref{eq:surj} is surjective, it is easy
	to see that every generator of $\Kgroup$ is a section of a commutator
	of a generator and a sufficiently large power of
  $a_1$. So it is easy to see that $\Kgroup'$ surjects geometrically onto
  $\Kgroup$. As $a_1$ is just the first level shift, and $\Kgroup'$ is a normal
  subgroup of $\Kgroup$, to show that $\bigoplus_{x \in \Z} \Kgroup' \subset
  \Kgroup'$, it is enough to show that $\Kgroup' @ 0 \subset \Kgroup'$. Since
  $\Kgroup$ is self-replicating, it is enough to show that $[s,t]@0 \in
  \Kgroup'$ for every commutator of two generators $s,t \in S$
	Now if $c$ and $d$ are the generators
  which have $s$ and $t$ as sections at $z$ and $w$, then a straight forward calculation
  shows $[a^{-z}ca^{z},a^{-w}da^{w}] = [s,t] @ 0$.
\end{proof}
\begin{lemma}
  The groups $\Kgroup(x_1\dots x_k, y_1 \dots y_p)$ are not residually finite.
  \label{lem:resfinite}
\end{lemma}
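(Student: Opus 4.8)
The plan is to exhibit a single nontrivial element of $\Kgroup$ that lies in the kernel of every homomorphism onto a finite group; equivalently, to embed a restricted wreath product $\Kgroup' \wr \Z$ into $\Kgroup$ and invoke the classical fact that such a product with non-abelian base is never residually finite. All the structure needed is supplied by Lemma~\ref{lem:branch}, together with the observation that $a_1$ acts on the first level as the coordinate shift.

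First I would record that $\Kgroup'$ is non-abelian. By Lemma~\ref{lem:growth} the group $\Kgroup$ surjects onto $\Z \wr \Z$, which is non-abelian, so $\Kgroup$ is non-abelian; and the composition \eqref{eq:surj} of Lemma~\ref{lem:branch} shows $\Kgroup'$ surjects onto $\Kgroup$, hence $\Kgroup'$ is non-abelian as well. In particular there exist $s,t \in \Kgroup'$ with $[s,t] \neq \Id$.

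Next I would set up the coordinate copies and the shift. Writing $\psi \colon \Stab_\Kgroup \hookrightarrow \bigoplus_{x \in \Z}\Kgroup$ for the injection induced by the wreath recursion, Lemma~\ref{lem:branch} gives $\bigoplus_{x \in \Z}\Kgroup' \subseteq \psi(\Kgroup')$, so for each $g \in \Kgroup'$ and each $n \in \Z$ there is an element $g@n \in \Kgroup'$ whose only nontrivial first-level section is $g$ at coordinate $n$. A short cocycle computation using the equations~\eqref{eqn:cocycle}, the fact that $a_1$ restricts to the identity at every vertex, and $a_1^{-1}(x) = x-1$, yields $(a_1 h a_1^{-1})_{|x} = h_{|x-1}$ for $h \in \Stab_\Kgroup$; that is, conjugation by $a_1$ shifts coordinates by one, so $a_1^{n}(t@0)a_1^{-n} = t@n$. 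Since $\rho(a_1^{n}) = n \neq 0$ for $n \neq 0$, the cyclic group $\langle a_1\rangle$ meets $\Stab_\Kgroup$ trivially, and $\langle \{g@n : g \in \Kgroup', n \in \Z\}, a_1\rangle$ is the internal restricted wreath product $\Kgroup' \wr \Z$ inside $\Kgroup$.

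Finally I would conclude. Put $c = [\,s@0,\ t@0\,]$, so that $\psi(c)$ is $[s,t]$ placed at coordinate $0$ and hence $c \neq \Id$. For every $n \neq 0$ the elements $s@0$ and $t@n = a_1^{n}(t@0)a_1^{-n}$ have disjoint support and therefore commute. Now let $\phi \colon \Kgroup \to F$ be any homomorphism to a finite group and let $N$ be the order of $\phi(a_1)$; then $\phi(t@N) = \phi(a_1)^{N}\phi(t@0)\phi(a_1)^{-N} = \phi(t@0)$, while $\phi(s@0)$ commutes with $\phi(t@N)$, so $\phi(s@0)$ commutes with $\phi(t@0)$ and thus $\phi(c) = \Id$. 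Hence the nontrivial element $c$ lies in the kernel of every finite quotient, so $\Kgroup$ is not residually finite; alternatively one simply cites Gruenberg's theorem that $A \wr \Z$ is residually finite only when $A$ is abelian. The only genuinely delicate point is the middle step --- verifying that the coordinate copies $g@n$ really sit inside $\Kgroup$ and that $a_1$ implements the shift --- but both are immediate from Lemma~\ref{lem:branch} and the description of $a_1$ as the first-level translation, so the remainder is bookkeeping.
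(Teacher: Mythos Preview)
Your argument is correct and follows essentially the same route as the paper: pick two non-commuting elements in the first-level stabilizer, conjugate one of them by a power of $a_1$ that is trivial in the given finite quotient but large enough to make the supports disjoint, and deduce that the commutator dies in every finite image. The only cosmetic difference is that you first manufacture single-coordinate elements $g@n \in \Kgroup'$ via Lemma~\ref{lem:branch} (so a shift by any nonzero $N$ already separates supports), whereas the paper works directly with arbitrary finite-support elements of $\Stab_\Kgroup$ and shifts far enough; your additional framing via an internal $\Kgroup' \wr \Z$ and Gruenberg's theorem is a nice gloss but not needed for the direct computation you already give.
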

\begin{proof}
  By the previous lemma, $\Stab_\Kgroup$ surjects onto $\Kgroup$, and since $\Kgroup$
  is not abelian (it surjects onto an non-abelian group), neither is $\Stab_\Kgroup$.
  Let $x,y \in \Stab_\Kgroup$ be a non-commuting pair. Suppose $\Kgroup$ is residually finite,
  then there exists a group homomorphism $\phi \colon \Kgroup \rightarrow F$ to a finite group $F$ such that
  $\phi([x,y])$ is non-trivial. But $F$ is finite, so $\phi(a_1)$ has finite order. So there is a $n > 0$
  With $\phi(a^{mn}_1) = 1$ for all $m$. Then $\phi([x,y]) = \phi([a^{-mn}_1xa^{mn}_1,y])$.
  Now under the wreath recursion, $x$ and $y$ have finite support in the direct sum
  $\bigoplus_{\Z} \aut_\Z(\Z)$, so for $m$ large enough, the support of $a^{-mn}_1xa^{mn}_1$ and $y$
  will be disjoint, hence they commute. So $\phi([x,y]) = \phi([a^{-mn}_1xa^{mn}_1,y])$ is trivial, so we arrive at an contradiction.
\end{proof}
\begin{theorem}
  The groups $\Kgroup(x_1 \dots x_k, y_1 \dots y_p)$ are amenable but
	not elementary subexponentially amenable. 
  \label{thm:amenable}
\end{theorem}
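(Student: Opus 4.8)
\emph{Amenability.} I would prove the two assertions by separate arguments. For amenability, the main difficulty is that $\Kgroup$ acts on a tree with infinite ($\Z$-)branching, so the classical amenability theorems for bounded automata groups over a finite alphabet (Bartholdi--Kaimanovich--Nekrashevych, Amir--Angel--Vir\'ag) do not apply directly. Instead the plan is to invoke the amenability criterion of \cite{Reinkegroup}, whose hypotheses match exactly what has been proved here: by Remark~\ref{rem:boundedactivity} the generating automaton $\Kaut(x_1\dots x_k,y_1\dots y_p)$ has bounded activity, and by Theorem~\ref{thm:schreier} every component of the reduced orbital Schreier graph is a tree with countably many ends. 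I would then check that $\Kgroup$ lies in the class to which the main amenability theorem of \cite{Reinkegroup} applies; the mechanism there is to establish extensive amenability of the action on the boundary of the Schreier tree (the countably-many-ends condition is what keeps this boundary action tame, playing the role of recurrence in the recurrent-groupoid method of Juschenko, Nekrashevych and de la Salle) and to combine it with amenability of the point stabilizers, which follows inductively from self-similarity.

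\emph{Not elementary subexponentially amenable.} Here I would run a rank-descent argument. Recall that the class is built by transfinite recursion from the groups of subexponential growth, closing under subgroups, quotients, extensions and directed unions; write $\eta$ for the associated rank. Rank is monotone under passing to subgroups and quotients, and for a finitely generated group a directed union must stabilise, so any finitely generated group of positive rank $\alpha$ is a genuine extension $1\to N\to \Kgroup\to Q\to 1$ with $N$ nontrivial and $\eta(N),\eta(Q)<\alpha$. Since $\Kgroup$ is finitely generated and has exponential growth by Lemma~\ref{lem:growth}, it is not of subexponential growth, so if it were elementary subexponentially amenable its rank would satisfy $\alpha\geq 1$, and we would obtain such an $N$ with $\eta(N)<\alpha$. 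On the other hand, Lemma~\ref{lem:branch} shows that $\Kgroup$ is weakly branch: $\bigoplus_{x\in\Z}\Kgroup'$ embeds geometrically into $\Kgroup'$, and the action is faithful as $\Kgroup\subset\Aut_\Z(\T)$. By the standard fact that a nontrivial normal subgroup of a faithful, level-transitive weakly branch group contains the commutator of the rigid stabiliser of some vertex, $N$ contains a copy of $\Kgroup''$, whence $\eta(N)\geq\eta(\Kgroup'')$. Finally, the coordinate section map $\Stab_\Kgroup\to\Kgroup$ restricts to the surjection $\Kgroup'\twoheadrightarrow\Kgroup$ of \eqref{eq:surj}, and being a homomorphism it sends $\Kgroup''$ onto $\Kgroup'$; hence $\eta(\Kgroup'')\geq\eta(\Kgroup')=\eta(\Kgroup)=\alpha$ by monotonicity of $\eta$. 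This yields $\alpha>\eta(N)\geq\eta(\Kgroup'')=\alpha$, a contradiction.

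\emph{Main obstacle.} The serious work is concentrated in the amenability statement, and is precisely what is outsourced to \cite{Reinkegroup}: passing from the combinatorial tameness of the Schreier graphs (Theorem~\ref{thm:schreier}) to an actual invariant mean requires handling the infinite alphabet, which is exactly where the finite-alphabet arguments break down. In the second part the only delicate input is that nontrivial normal subgroups have full rank; once the weakly branch normal-subgroup lemma and the surjection $\Kgroup''\twoheadrightarrow\Kgroup'$ are in place, the remainder is bookkeeping with the ordinal rank $\eta$.
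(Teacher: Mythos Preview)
Your proposal is correct and follows the same overall architecture as the paper: invoke \cite{Reinkegroup} for amenability, and use the weakly-branch structure of Lemma~\ref{lem:branch} together with exponential growth for the non-elementary-subexponential part.

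There is one noteworthy difference in the amenability half. You feed Theorem~\ref{thm:schreier} (countably many ends of the orbital Schreier components) into the criterion of \cite{Reinkegroup}, treating it as the recurrence-type input. The paper does \emph{not} use Theorem~\ref{thm:schreier} here at all: it simply observes that bounded activity (Remark~\ref{rem:boundedactivity}) places $\Kgroup$ inside $\autBfs(\Z^*;\Z)$, and then appeals to Theorem~B of \cite{Reinkegroup} with the sole hypothesis that the left action of $\Z$ on itself is recurrent (simple random walk on $\Z$). So the paper's route is shorter and more structural---amenability comes from the ambient group and never touches the Schreier-graph analysis---whereas your route makes the combinatorics of Section~5 do real work. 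Both are valid paths through \cite{Reinkegroup}; the paper's is the cleaner one.

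For the second assertion, the paper just cites Corollary~3 of \cite{juschenko2018} (noting afterwards that the finite-alphabet hypothesis there is inessential for subgroups of $\aut_\Z(\Z^*)$). Your ordinal-rank descent, using that a nontrivial normal subgroup of a weakly branch group contains $\mathrm{rist}_v'$ and hence a copy of $\Kgroup''$, together with the surjection $\Kgroup''\twoheadrightarrow\Kgroup'$ induced by \eqref{eq:surj}, is exactly the content of that corollary spelled out by hand. So here you are not doing anything different, just unpacking the reference.
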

\begin{proof}
  We invoke Theorem~B of \cite{Reinkegroup} to show that the groups $\Kgroup(x_1 \dots x_k, y_1 \dots y_p)$
  are amenable. We already observed in Remark~\ref{rem:boundedactivity} that the groups are generated by bounded activity automata.
  Hence they are subgroups of $\autBfs(\Z^*;\Z)$. As the left action of $\Z$ on itself is recurrent, by Theorem~B of \cite{Reinkegroup} the group $\autBfs(\Z^*;\Z)$ is recurrent, and so are the subgroups $\Kgroup(x_1 \dots x_k, y_1 \dots y_p)$.

  The groups have exponential growth by Lemma~\ref{lem:growth}. Lemma~\ref{lem:branch}
	together with Corollary~3 of \cite{juschenko2018} imply that the
	groups are not elementary subexponentially amenable. 
\end{proof}
We should note that \cite{juschenko2018} only deals with finite
alphabets. The proof can be easily modified to deal with subgroups of
$\aut_\Z(\Z^*)$.

\newpage

\section{Outlook}
This paper is the beginning of our study of iterated monodromy groups for entire transcendental maps and
a stepping stone towards a more general discussion. The regularity of the monodromy of the exponential map simplifies
the discussion and has consequences that are special to the exponential case. In particular, the left-order on
the dynamical preimage tree heavily uses this regularity. For other entire transcendental functions, we should expect
torsion elements in the monodromy group and torsion elements for some iterated monodromy groups of functions in that parameter space.

In an upcoming paper \cite{Reinkeentire} we discuss the general structure of iterated monodromy groups of entire maps. In particular, we also apply
the results of \cite{Reinkegroup} to show that the iterated monodromy groups of entire functions are amenable if and only their monodromy group is.
For polynomials and the exponential family, the condition is trivially satisfied, as finite groups and abelian groups are amenable. However, there are entire maps with virtually free monodromy groups, so we have to impose this condition.

Moreover, we can also try to generalize from entire functions to meromorphic functions. Here a good starting family would be the functions of
the form $M \circ \exp$ including tangent, where $M$ is a Möbius transform. We should
think of this as the analogy to the family of bicritical rational maps,
see also Appendix D of~\cite{milnor2000}. In this case, we can also define iterated monodromy group for post-singularly finite maps and show that they are ZC-groups. So the class of ZC-groups, in particular subgroups of $\aut_\Z(\Z^*)$ has many examples of self-similar groups coming from complex dynamics. This warrants a further general investigation of ZC-groups.

Outside of this family $M \circ \exp$, we should not expect to have
the left-orderability of all IMGs in one parameter space, as it might be a special
phenomenon due to the very rigid monodromy groups of exponential maps.
\bibliographystyle{alpha}
\bibliography{../bibfile.bib}
\end{document}